\documentclass[reqno]{amsart}

\usepackage{amsmath,amsthm,amssymb,bm}
\usepackage{hyperref}
\usepackage{a4wide}
\usepackage{cleveref}
\usepackage{kotex}
\usepackage{graphicx,color}
\usepackage{tikz}
\numberwithin{equation}{section}
\usetikzlibrary{matrix,decorations.pathreplacing,arrows.meta}
\usepackage{xcolor}
\usepackage{aliascnt}
\newcommand{\newsharedtheorem}[2]{%
  \newaliascnt{#1}{thm}%
  \newtheorem{#1}[#1]{#2}%
  \aliascntresetthe{#1}%
}
\newtheorem{thm}{Theorem}[section]
\newsharedtheorem{lem}{Lemma}
\newsharedtheorem{prop}{Proposition}
\newsharedtheorem{cor}{Corollary}
\theoremstyle{definition}
\newsharedtheorem{exam}{Example}
\newsharedtheorem{defn}{Definition}
\newsharedtheorem{conj}{Conjecture}
\newsharedtheorem{question}{Question}
\newsharedtheorem{problem}{Problem}
\newsharedtheorem{remark}{Remark}

\crefname{thm}{Theorem}{Theorems}
\Crefname{thm}{Theorem}{Theorems}
\crefname{lem}{Lemma}{Lemmas}
\Crefname{lem}{Lemma}{Lemmas}
\crefname{prop}{Proposition}{Propositions}
\Crefname{prop}{Proposition}{Propositions}
\crefname{cor}{Corollary}{Corollaries}
\Crefname{cor}{Corollary}{Corollaries}
\crefname{defn}{Definition}{Definitions}
\Crefname{defn}{Definition}{Definitions}
\crefname{conj}{Conjecture}{Conjectures}
\Crefname{conj}{Conjecture}{Conjectures}
\newcommand\Col{\operatorname{Col}}
\newcommand\coinv{\operatorname{coinv}}
\newcommand\inv{\operatorname{inv}}
\newcommand\SRT{\operatorname{SRT}}
\newcommand\RT{\operatorname{RT}}

\newcommand\Row{\operatorname{Row}}

\newcommand\NASF{\operatorname{NASF}}

\newcommand\sh{\operatorname{sh}}
\renewcommand\neg{\operatorname{neg}}

\newcommand\Des{\operatorname{Des}}

\newcommand\maj{\operatorname{maj}}
\newcommand\leg{\operatorname{leg}}
\newcommand\arm{\operatorname{arm}}

\newcommand{\ZZ}{\mathbb{Z}}

\newcommand\SSYT{\operatorname{SSYT}}

\newcommand\wt{\operatorname{wt}}

\newcommand\Qbinom[3]{\genfrac{[}{]}{0pt}{}{#1}{#2}_{#3}}
\newcommand\qbinom[2]{\Qbinom{#1}{#2}{q}}

\newcommand\LLT{\operatorname{LLT}}

\newcommand\South{\operatorname{South}}

\definecolor{bluegray}{rgb}{0.4,0.6,0.8}

\definecolor{frenchblue}{rgb}{0.0, 0.45, 0.73}
\renewcommand\emph[1]{\textcolor{blue}{\it #1}}

\title{The \( e \)-expansion of integral form Macdonald polynomials}
\keywords{integral form Macdonald polynomials, \( e \)-positivity, chromatic symmetric functions}

\author{Jim Haglund}
\address{
Department of Mathematics, University of Pennsylvania, Philadelphia, PA 19104, USA}
\email{jhaglund@math.upenn.edu}

\author{Jang Soo Kim}
\address{Department of Mathematics,
Sungkyunkwan University, Suwon, Gyeonggi-do 16419, South Korea}
 \email{jangsookim@skku.edu}

\author{Jaeseong Oh}
\address{Department of Mathematics,
Sungkyunkwan University, Suwon, Gyeonggi-do 16419, South Korea}
\email{jaeseongoh@skku.edu}

\author{Meesue Yoo}
\thanks{The second author was supported by the National Research Foundation of Korea (NRF) grant funded by the Korea government RS-2025-00557835.
The third author was supported by the National Research Foundation of Korea (NRF) grant funded by the Ministry of Education (RS-2026-25469155).
The fourth author was supported by funding for the academic research program of Chungbuk National University in 2026 and NRF grant RS-2024-00344076.}
\address{
Department of Mathematics, Chungbuk National University, Cheongju 28644,
South Korea}
\email{meesueyoo@chungbuk.ac.kr}

\subjclass[2020]{Primary: 05E05; Secondary: 05A30, 33D15}

\begin{document}

\begin{abstract}
In this paper, we investigate the expansion of the integral form Macdonald polynomials $J_\mu (x;q,t)$ in the elementary symmetric function basis. Motivated by the connection between chromatic symmetric functions and LLT polynomials, we conjecture the $e$-positivity of a normalization of $J_\mu (x;q,t)$. We verify the conjecture when $\mu$ is either a hook or a two-column partition. In the hook case, we express the $e$-coefficients as weighted sums over compositions. In the two-column case, we derive explicit formulas for the $e$-coefficients.
\end{abstract}

\maketitle



\section{Introduction}
\label{sec:introduction}

Macdonald polynomials \( P_\mu(x;q,t) \) are a family of symmetric
functions introduced by Macdonald in \( 1988 \) \cite{Macdonald1988,
  Macdonald}. Macdonald polynomials generalize many well-known bases
of symmetric functions, including Schur functions and Hall--Littlewood
polynomials. Moreover, due to their deep connections with various
areas such as special function theory, representation theory,
algebraic geometry, and quantum mechanics, they have been the subject
of extensive study since their introduction. In the same work,
Macdonald also defined the integral form Macdonald polynomials
\( J_\mu(x;q,t) \) with coefficients in \( \ZZ[q,t] \):
\begin{equation}\label{eq:J=P}
  J_\mu (x;q,t)= P_\mu(x;q,t) \prod_{c\in\mu}(1-q^{\arm(c)}t^{\leg(c)+1}).
\end{equation}
See \Cref{sec:preliminaries} for the precise definitions.

In \( 2005 \), Haglund, Haiman, and Loehr \cite{HHL2005} made a major breakthrough in
Macdonald theory by discovering a combinatorial formula for the modified Macdonald
polynomials \( \tilde{H}_\mu (x;q,t) \), defined by
\begin{equation}\label{eq:H=J}
  \tilde{H}_\mu (x;q,t)=t^{n(\mu)}J_\mu \left[  \frac{X}{1-1/t};q,1/t \right],
\end{equation}
where \( X=x_1+x_2+\cdots \), the square brackets indicate
\emph{plethystic substitution}, and
\( n(\mu)=\sum_{i\geq 1}(i-1)\mu_i \). Haiman \cite{Haiman2001} proved
that \( \tilde{H}_\mu(x;q,t) \) is Schur positive, i.e., its Schur
coefficients belong to \( \ZZ_{\ge0}[q,t] \).

In \cite{HHL2005}, Haglund, Haiman, and Loehr expressed
\( \tilde{H}_\mu (x;q,t) \) as a positive linear combination of
\( \LLT \) polynomials. Introduced by Lascoux, Leclerc, and Thibon
\cite{LLT1997}, the \( \LLT \) polynomials \( \LLT_{\nu}(x;q) \) are
symmetric functions that are known to be Schur positive \cite{GH07}.
In general, LLT polynomials are indexed by tuples of skew shapes. The
special case in which each skew shape consists of a single cell gives
rise to the so-called \emph{unicellular LLT polynomials}, which can
also be indexed by Dyck paths.

The unicellular LLT polynomials are closely related to the
\emph{chromatic symmetric functions} \( \chi_\gamma (x;q) \)
introduced in \cite{SW2016} when \( \gamma \) is a Dyck path of
length \( 2n \) corresponding to the incomparability graph of a natural unit interval
order. More precisely, Haglund and Wilson \cite[Eq.~(6)]{Haglund2020}
observed that the result of Carlsson and Mellit
\cite[Proposition~3.5]{CM2018} is equivalent to
\begin{equation}\label{eq:X=LLT}
  \chi_\gamma(x;q)=\frac{\LLT_\gamma [(q-1)X;q]}{(q-1)^n}.
\end{equation}
The chromatic symmetric functions \( \chi_\gamma(x;q) \) have been shown to
expand positively in the elementary symmetric function basis (a
property we refer to as \emph{\( e \)-positivity} henceforth) when
\( q=1 \) \cite{Hikita2025, GMRWW2025}, and were conjectured by
Shareshian and Wachs \cite{SW2016} to have \( e \)-coefficients in
\( \ZZ_{\ge 0}[q] \).\footnote{OpenAI \cite{OpenAI2026} has claimed a proof
of this conjecture.}

We note that \eqref{eq:H=J} is equivalent to
\begin{equation}\label{eq:J=H}
      J_\mu\big(x; q, t^{-1}\big)
      = t^{-\sum_{i\ge1} i\mu_i}\, \tilde{H}_\mu\big[(t-1)X; q, t\big].
\end{equation}
The present work is motivated by the analogy between \eqref{eq:X=LLT}
and \eqref{eq:J=H}. On the right-hand sides, the unicellular LLT
polynomials \( \LLT_\gamma(x;q) \) and the modified Macdonald
polynomials \( \tilde{H}_\mu(x;q,t) \) are both Schur positive. In
view of this, the conjectured \( e \)-positivity of
\( \chi_\gamma(x;q) \) motivates the investigation of
\( e \)-positivity for suitably normalized specializations of integral
form Macdonald polynomials \( J_\mu\big(x; q, t^{-1}\big) \).

With regard to Schur positivity, Haglund conjectured
(see \cite[Eq.~(1)]{AHW2021}) that, for every partition \( \mu \)
and every positive integer \( k \), the symmetric function
\[
  \frac{J_\mu(x;q,q^k)}{(1-q)^{|\mu|}}
\]
has Schur coefficients in \( \ZZ_{\ge0}[q] \). Several special cases
have been proved in \cite{Aritra, Yoo2012, Yoo2015}. Furthermore,
Wilson~\cite{Wilson} conjectured that, for every negative integer
\( k \), the expression obtained by replacing \( (1-q) \) with
\( (q-1) \) and multiplying by a suitable power of \( q \) is also
Schur positive.

Building on this conjecture and the preceding discussion of the
\( e \)-positivity of \( J_\mu\big(x; q, t^{-1}\big) \) under an
appropriate normalization, we propose the following conjecture.

\begin{conj}\label{conj:epositivity}
  For any partition \( \mu \) and any positive integer
  \( k \), the following is \( e \)-positive:
\begin{equation}\label{eq:Schurpositivity}
  q^{k\sum_{i\ge1} i \mu_i}\frac{J_\mu (x;q,q^{-k})}{(q-1)^{|\mu|}}.
\end{equation}
\end{conj}

In this paper, we prove this conjecture when \( \mu \) is either a
hook or a two-column partition. To do this, we first consider the
\( e \)-expansion of the integral form Macdonald polynomial:
\[       
  J_\mu(x;q,t) = \sum_{\lambda\vdash |\mu|} c_{\mu,\lambda}(q,t) e_\lambda.
\]
When \( \mu \) is a hook, we express the coefficients
\( c_{\mu,\lambda}(q,t)=c_{\mu,\lambda} \) as weighted sums over compositions
(\Cref{thm:main}). As a consequence, we obtain a positive
\( e \)-expansion of \eqref{eq:Schurpositivity}
(\Cref{cor:hook-positivity}). When \( \mu \) has at most two columns,
we compute the coefficients explicitly as products
(\Cref{thm:two-column}), which implies the \( e \)-positivity of
\eqref{eq:Schurpositivity}. In addition, for an arbitrary partition
\( \mu \) of \( n \), we give a product formula for the coefficient
\( c_{\mu,(n)} \) of \( e_n \) (\Cref{thm:e_n}).

The rest of the paper is organized as follows. In
\Cref{sec:preliminaries} we recall basic definitions and known
results. In \Cref{sec:one-row-direct} we give an \( e \)-expansion
formula for \( J_\mu(x; q, t) \) when \( \mu \) is a one-row
partition. In \Cref{sec:hook} we extend this result to hooks and prove
\Cref{conj:epositivity} in this case. \Cref{sec:two-column} treats the
two-column case, and \Cref{sec:one-row} the \( e_n \)-coefficient.
Finally, in \Cref{sec:chrom-symm-funct}, we explain a connection to
\( \alpha \)-chromatic symmetric functions and propose a conjecture on
the \( q\to1 \) limit of \eqref{eq:Schurpositivity}.


\subsection*{Declaration on Generative AI}

The ChatGPT 5.5, 5.6, and 6 Pro models and Claude Opus~4.7 were used
for the following tasks:
\begin{itemize}
\item Simplifying the authors' original combinatorial proof of
  \Cref{thm:main} in \Cref{sec:combinatorial-proof}.
\item Finding a new recursive proof of \Cref{thm:main} in
  \Cref{sec:recursive-proof}.
\item Finding and proving \Cref{prop:one-row-positivity}
  and \Cref{cor:hook-positivity}.
\item Proving \Cref{thm:e_n}.
\item Proofreading the manuscript and conducting computational experiments with Sage.
\end{itemize}
All proofs obtained with the assistance of these AI tools have been
independently verified and rewritten by the authors.

\section{Preliminaries}
\label{sec:preliminaries}

In this section, we introduce the basic definitions and known results
that will be used throughout the paper.

\subsection{Basic definitions}

A \emph{composition} of \( n \) is a sequence
\( \alpha =(\alpha_1,\alpha_2,\dots,\alpha_l)\) of positive integers
satisfying \( \sum_i \alpha_i =n \), denoted by \( \alpha\vDash n \).
Each \( \alpha_i \) is called a \emph{part} of \( \alpha \), and the
number of parts is called the \emph{length} of \( \alpha \), denoted
by \( \ell(\alpha) \). The sum of the parts of \( \alpha \), denoted
by \( |\alpha| \), is called the \emph{size} of \( \alpha \). A
\emph{partition} is a composition such that the parts are weakly
decreasing. If \( \sum_i \lambda_i = n \), we say that \( \lambda \)
is a partition of \( n \) and write \( \lambda\vdash n \). For
notational convenience, we also write \( (0) \) for the empty partition
\( \emptyset \).

If a composition \( \alpha \) is equal to a rearrangement of a
partition \( \lambda \), we write \( \alpha\sim \lambda \). For
partitions \( \lambda, \mu\vdash n \), we define the \emph{dominance
  order} by \( \mu\le \lambda \) if
\[
  \mu_1+ \cdots +\mu_i \le \lambda_1 +\cdots + \lambda_i \quad \text{ for all } i\ge 1,
\]
where both
partitions are extended by zeros. We also write \( (0) \) for the empty composition
\( \emptyset \)

Given a partition \( \lambda=(\lambda_1,\dots,\lambda_k) \), its
\emph{diagram} is an array of unit squares, called \emph{cells}, with
\( \lambda_i \) cells in the \( i \)th row from the bottom,
left-justified. We often identify \( \lambda \) with its diagram. A
cell in row \( i \) and column \( j \) is denoted by \( (i,j) \). The
\emph{conjugate} \( \lambda' \) of \( \lambda \) is the partition
whose diagram is obtained from \( \lambda \) by reflecting across the
diagonal \( y=x \). The \emph{reading order} of \( \lambda \) is the
order of its cells obtained by reading the rows from top to bottom
and the cells in each row from left to right.

A partition of the form \( (r,1^s) \) with \( r\ge1 \) and \( s\ge0 \)
is called a \emph{hook}. Here, \( a^b \) means the sequence of \( b \)
\( a \)'s. Note that hooks include one-row partitions \( (n) \) and
one-column partitions \( (1^n)=(1,1^{n-1}) \).

Given a cell \( c\in \lambda \), the \emph{arm} (resp. \emph{coarm}) length of \( c=(i,j) \) is the number of cells
in row \( i \) to the right (resp. left) of \( c \) in \( \lambda \), denoted by \( \arm(c) \) (resp. \( \arm'(c) \)).
The \emph{leg} (resp. \emph{coleg}) length of \( c=(i,j) \) is the number of cells
in column \( j \) strictly above (resp. below) \( c \) in \( \lambda \), denoted by \( \leg(c) \) (resp. \( \leg'(c) \)). See \Cref{fig:1} for an example. 

\begin{figure}
  \centering
 \begin{tikzpicture}[scale=0.66]
    \tikzset{cell/.style={draw=black, line width=0.7pt}}
    \foreach \c/\r in {%
      4/1,                   
      1/2, 2/2, 3/2,         
      5/2, 6/2, 7/2, 8/2,    
      4/3, 4/4}{             
      \fill[gray!20] (\c-1,\r-1) rectangle (\c,\r);
    }
    \foreach \r/\L in {1/9, 2/8, 3/6, 4/6, 5/3}{
      \foreach \c in {1,...,\L}{
        \draw[cell] (\c-1,\r-1) rectangle (\c,\r);
      }
    }
    \node at (3.5,1.5) {$c$};
    \foreach \c in {1,2,3}{
      \node at (\c-0.5,1.5) {$\mathbf{a}'$};
    }
    \foreach \c in {5,6,7,8}{
      \node at (\c-0.5,1.5) {$\mathbf{a}$};
    }
    \foreach \r in {3,4}{
      \node at (3.5,\r-0.5) {$\mathbf{l}$};
    }
    \node at (3.5,0.5) {$\mathbf{l}'$};
  \end{tikzpicture}
  \caption{For the cell \( c=(2,4) \) in \( \lambda=(9,8,6,6,3) \),
    the arm, coarm, leg, and coleg lengths are 
\( \arm(c)=4 \),  \( \arm'(c)=3 \),
   \( \leg(c)=2 \) and  \( \leg'(c)=1 \), respectively.}
  \label{fig:1}
\end{figure}

For a partition \( \lambda \), a \emph{semistandard Young tableau} of
shape \( \lambda \) is a filling of \( \lambda \) with positive
integers such that the entries are weakly increasing from left to
right in each row and strictly increasing from bottom to top in each
column. We use \( \SSYT(\lambda) \) to denote the set of semistandard
Young tableaux of shape \( \lambda \).

For a nonnegative integer \( r \) and a partition \( \mu \), we use
the \emph{\( q \)-Pochhammer symbols}
\[
  (a;q)_r = \prod_{i=0}^{r-1}(1-aq^i),
  \qquad
  (a;q)_\mu = \prod_{i=1}^{\ell(\mu)}(a;q)_{\mu_i},
\]
where \( (a;q)_0=1 \). For integers \( n,k \) with
\( 0\le k\le n \), the \emph{\( q \)-binomial coefficient} is defined by
\[
  \qbinom{n}{k}
  =\frac{(q;q)_n}{(q;q)_k(q;q)_{n-k}}.
\]
We also define
\[
  [n]_q=1+q + \cdots + q^{n-1}, \qquad [n]_q!=\prod_{i=1}^n[i]_q.
\]

\subsection{Symmetric functions}

Let \( \Lambda_F \) denote the \( F \)-algebra of symmetric functions
in infinitely many variables \( x=(x_1,x_2,\dots) \) with coefficients in
\( F=\mathbb{Q}(q,t) \). For a partition
\( \lambda=(\lambda_1,\dots,\lambda_l) \), the \emph{monomial
  symmetric function} \( m_{\lambda} \) is defined by
\( m_{\lambda} = \sum x^{\alpha}, \) where the sum is over all
sequences \( \alpha=(\alpha_1,\alpha_2,\dots) \) obtained by
rearranging \( (\lambda_1,\dots,\lambda_l,0,0,\dots) \), and
\( x^\alpha=x_1^{\alpha_1}x_2^{\alpha_2}\cdots \). The
\emph{elementary symmetric function} \( e_\lambda \) is defined by
\( e_\lambda =\prod_i e_{\lambda_i}\), where
\( e_n=\sum_{ i_1<\cdots <i_n} x_{i_1}\cdots x_{i_n}\). The
\emph{complete homogeneous symmetric function} \( h_\lambda \) is
defined by \( h_\lambda =\prod_{i} h_{\lambda_i} \), where
\( h_n =\sum_{i_1\le \cdots \le i_n}x_{i_1}\cdots x_{i_n} \). The
\emph{power sum symmetric function} \( p_\lambda \) is defined by
\( p_\lambda =\prod_i p_{\lambda_i} \), where
\( p_n = \sum_i x_i ^n \). We also define \( e_0=h_0=p_0=1 \). The
\emph{Schur function} \( s_\lambda \) is defined combinatorially by
\[
  s_\lambda = \sum_{T\in \SSYT(\lambda )} x^T,
\]
where \( x^T=\prod_i x_i^{a_i} \) with \( a_i \) the number of
occurrences of \( i \) in \( T \).

We note that each of \( \{m_\lambda\} \), \( \{e_\lambda\} \), \( \{h_\lambda\} \), \( \{ p_\lambda\} \), and \( \{s_\lambda\} \) forms
a basis of the space of symmetric functions. 
With a slight abuse of notation, for any of the bases \( \{g_\lambda\} \) 
listed above, we write \( [g_\lambda]f \) for the coefficient of \( g_\lambda \)
in the expansion of a symmetric function \( f\in \Lambda_F \) in that basis.

Let \( E=E(t_1, t_2, \dots) \) be a formal Laurent series in
\( t_1, t_2,\dots \) with rational coefficients. For
\( f=\sum_{\lambda}c_\lambda p_\lambda \in \Lambda_F \), the
\emph{plethystic substitution} \( f[E] \) is defined by
\[
  f[E]:=\sum_{\lambda}c_\lambda p_\lambda [E],
\]
where \( p_\lambda [E] = \prod_{i} p_{\lambda_i}[E] \) and
\( p_k[E] = E\left(t_1 ^k, t_2 ^k, \dots\right)\). We refer the reader
to \cite[Ch.~I, \S8]{Macdonald} for more details on plethysm.

\subsection{Macdonald polynomials}

The \emph{Macdonald polynomials} \( P_\mu=P_\mu(x;q,t) \) are the
unique symmetric functions satisfying
\[
  P_\mu=m_\mu+\sum_{\lambda<\mu}u_{\mu\lambda}(q,t)m_\lambda,
  \qquad
  \langle P_\lambda,P_\mu\rangle_{q,t}=0\quad\text{if }\lambda\ne\mu,
\]
for suitable coefficients \( u_{\mu\lambda}(q,t)\in F \); see
\cite{Macdonald1988,Macdonald}. Here,
\( \langle \cdot ,\cdot \rangle_{q,t} \) is the inner product on
\( \Lambda_F \) determined by
\begin{equation}\label{eq:z_la}
  \langle p_\lambda,p_\mu\rangle_{q,t}
  =\delta_{\lambda\mu}z_\lambda
    \prod_{i=1}^{\ell(\lambda)}\frac{1-q^{\lambda_i}}{1-t^{\lambda_i}},
  \qquad
  z_\lambda=\prod_{r\ge1}r^{m_r(\lambda)}m_r(\lambda)!,
\end{equation}
where \( m_r(\lambda) \) is the multiplicity of \( r \) in
\( \lambda \). The \emph{integral form Macdonald polynomials}
\( J_\mu(x;q,t) \) and the \emph{modified Macdonald polynomials}
\( \tilde{H}_\mu(x;q,t) \) are defined by
\begin{align}
  \notag
  J_\mu(x;q,t)
  &= P_\mu(x;q,t) \prod_{c\in\mu}(1-q^{\arm(c)}t^{\leg(c)+1}),\\
  \label{eq:1}
  \tilde{H}_\mu[X;q,t]
  &=t^{n(\mu)}J_\mu\left[\frac{X}{1-t^{-1}};q,t^{-1}\right],
\end{align}
where \( n(\mu)=\sum_i (i-1)\mu_i \). In \cite{HHL2005}, Haglund,
Haiman, and Loehr discovered a remarkable combinatorial formula for
\( \tilde{H}_\mu(x;q,t) \). To state their result, we need some
definitions.

For a partition \( \mu\vdash n \), a \emph{filling} \( \sigma:\mu\rightarrow \ZZ^+ \) is an
assignment of positive integers to the cells in the diagram of \( \mu \).
A filling is \emph{standard} if the entries are precisely \( 1,2,\dots,|\mu| \). 
The monomial \( x^\sigma \) associated with the filling \( \sigma \) is defined by 
\[
  x^\sigma = \prod_{c\in \mu}x_{\sigma(c)}.
\]

For a cell \( c=(i,j)\in \mu \), let \( \South(i,j)=(i-1, j) \). Given a filling
\( \sigma:\mu\rightarrow \ZZ^+ \), the \emph{descent set}
\( \Des(\sigma) \)
and the \emph{major index} \( \maj (\sigma) \)
are defined by
\begin{align*}
  \Des(\sigma)
  &=\{ c\in \mu :\South(c)\in\mu,\,  \sigma(c)> \sigma(\South (c))\},\\
  \maj (\sigma)
  &=\sum_{c\in\Des(\sigma)}(\leg(c)+1).
\end{align*}

A \emph{triple} of \( \mu \) is a set \( \{u,v,w\} \) of three cells
with \( u,v\in\mu \) such that \( v \) is in the same row as \( u \)
and strictly to its right, and \( w=\South(u) \), which may be below
the first row. Let \( \sigma \) be a filling of \( \mu \). We orient a
triple by starting at the cell whose entry is smallest, proceeding to
the cell with the next-largest entry, and ending at the cell with the
largest entry. If two cells have the same entry, the entry of the cell
that appears earlier in the reading order of \( \mu \) is considered
smaller. The triple is an \emph{inversion triple} if this circular
motion is counterclockwise, and a \emph{coinversion triple} if it is
clockwise. We denote by \( \inv(\sigma) \)
(resp. \( \coinv(\sigma) \)) the number of inversion triples
(resp. coinversion triples) of \( \sigma \). Here, we define
\( \sigma(w)=\infty \) if \( w \) is below the first row. Note that
\[
  \coinv(\sigma) = n(\mu') - \inv(\sigma).
\]

\begin{thm}[{\cite[Theorem 2.2]{HHL2005}}]\label{thm:1}
  For a partition \( \mu \), the modified Macdonald polynomial \( \tilde{H}_\mu(x;q,t) \)
  is given by
  \begin{equation}\label{eq:11}
\tilde{H}_\mu (x;q,t)=\sum_{\sigma:\mu\rightarrow \ZZ^+}x^\sigma q^{\inv(\sigma)}t^{\maj(\sigma)}.
  \end{equation}
 \end{thm}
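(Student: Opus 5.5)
The plan is to verify that the right-hand side of \eqref{eq:11}, call it \( C_\mu(x;q,t) \), satisfies the three conditions that uniquely characterize \( \tilde{H}_\mu \) (Haiman's characterization, equivalent to the definition \eqref{eq:1} through \( J_\mu \) and the triangularity and orthogonality defining \( P_\mu \)):
\[
  \text{(a) } C_\mu[X(1-q);q,t]\in \mathbb{Q}(q,t)\{s_\lambda:\lambda\ge\mu\},\quad
  \text{(b) } C_\mu[X(1-t);q,t]\in \mathbb{Q}(q,t)\{s_\lambda:\lambda\ge\mu'\},\quad
  \text{(c) } \langle C_\mu,s_{(n)}\rangle=1 .
\]
Before any of this is meaningful, I must show that \( C_\mu \) is symmetric at all.

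Condition (c) is immediate. The coefficient of \( x_1^n \) comes only from the constant filling \( \sigma\equiv 1 \). This filling has no descents. Every triple is ordered by reading order, with \( w \) below the first row counted as \( \infty \), and this ordering makes every triple a coinversion triple, so \( \inv=0 \). Hence the coefficient is \( 1 \).

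For symmetry, I would first group fillings by their descent set \( D\subseteq\mu \). Then \( t^{\maj} \) is constant on each group, and I expect the inner sum \( \sum_{\Des(\sigma)=D}x^\sigma q^{\inv(\sigma)} \) to be, up to a power of \( q \), an LLT polynomial. The mechanism: inversion triples restricted to the cells of a fixed pair of rows translate into attacking inversions for a tuple of ribbons. Each ribbon is built from a column of \( \mu \) whose shape is dictated by \( D \). Inclusion–exclusion over \( D \) (fixing a superset of descents rather than the exact set) gives the same decomposition with alternating signs. Symmetry of \( C_\mu \) then follows from the symmetry of LLT polynomials. If I want a self-contained argument, I would prove that symmetry via a local involution swapping adjacent letters \( i,i+1 \) that preserves \( \inv \), in the style of HHL's Bender--Knuth-type argument.

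The substantive part is (a) and (b). For this I would use the superization of fillings. Plethysm by \( X(1-q) \), i.e.\ \( f[X-qY] \) with \( Y=X \), corresponds to fillings from a signed alphabet \( \mathcal{A}=\{1<\bar1<2<\bar2<\cdots\} \). Each negative letter carries weight \( -q \), and the tie-breaking rule in descents and inversions is reversed for negative letters. I would prove this super-formula from the LLT decomposition, using the standard fact that LLT polynomials have such superizations. For (a), an expansion lies in \( \operatorname{span}\{s_\lambda:\lambda\ge\mu\} \) precisely when the coefficient of every super-monomial with fewer than \( \mu_1+\cdots+\mu_i \) small letters vanishes, in the appropriate dual sense. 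I would therefore build a sign-reversing, weight-preserving involution on super-fillings that toggles \( a\leftrightarrow\bar a \) at a suitable cell, namely the first cell in reading order where an entry is repeated within a row. The pigeonhole principle forces such a repeated entry whenever the content is "too small", and the toggle should change neither \( \maj \) nor \( \inv \), since a repeated entry in a row is exactly where the two tie-breaking rules interact. Condition (b) would follow in the same way, now working column-wise with \( t \) and the descents.

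I expect the main obstacle to be checking that this involution really preserves \( q^{\inv}t^{\maj} \) times the sign across all triples touching the toggled cell; this is a delicate case analysis. The symmetry step is the other nontrivial input, but it can be outsourced to the known symmetry of LLT polynomials.
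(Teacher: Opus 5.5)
The paper gives no proof of this theorem; it quotes it from Haglund--Haiman--Loehr. Your overall strategy is the standard one: Haiman's characterization (a)--(c), symmetry via LLT polynomials, superization, and a sign-reversing involution. Your check of (c) is fine. The involution step, however, fails as you describe it, and (b) does not follow ``in the same way.''

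First, the involution for (a). The cancellation you want does not occur in $C_\mu[X(1-q)]$ itself. For $\mu=(2)$ one has $C_{(2)}[X(1-q)]=(1-q)(1-q^2)h_2$, whose coefficient of $x_1^2$ is nonzero even though the row has a repeated entry. The statement that repeated row entries cancel is the dual form of (a):
$$C_\mu[X(q-1)]=(-1)^n\omega\bigl(C_\mu[X(1-q)]\bigr)\in\operatorname{span}\{m_\kappa:\kappa\le\mu'\}.$$
In the superization of $C_\mu[X(q-1)]$, positive letters carry $q$ and negative letters carry $-1$.

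In either normalization, toggling one letter multiplies the letter weight by $-q^{\pm1}$. So a toggle that changes neither $\inv$ nor $\maj$ pairs $w$ with $-q^{\pm1}w$ and cancels nothing. The toggle must fix $\maj$ and change $\inv$ by exactly one.

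With the order $1<\bar1<2<\bar2<\cdots$, two cells with the same absolute value compare according to the sign of the one earlier in reading order. Hence toggling $u$ flips exactly its comparisons with later cells of the same absolute value. Using $\inv=|\mathrm{Inv}|-\sum_{\Des}\arm$, you therefore need $u$ to attack exactly one such later cell, including attacked cells in the row below, and you need $|\sigma(\South(u))|\ne|\sigma(u)|$. Your ``first repeated cell in a row'' violates this. In the row $1,1,1$ of $\mu=(3)$, toggling the leftmost cell changes $\inv$ by $2$. A choice that does work is the last cell in reading order that attacks a later cell with the same absolute value.

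Second, (b) cannot be obtained by the column analogue. Toggling a cell $u$ with $|\sigma(u)|=|\sigma(\South(u))|$ flips the descent at $u$, so $\maj$ changes by $\leg(u)+1$. A single factor $t^{\pm1}$ cannot absorb this in general. The decisive problem is shown by $\mu=(1^3)$. Here $C_\mu[X(t-1)]=-(t;t)_3e_3$, so the coefficient of $x_1^2x_2$ is $0$. With your order, and with weight $t$ per positive letter and $-1$ per negative letter, the contributions by absolute values read bottom to top are:
\begin{itemize}
\item superfillings reading $1,1,2$ contribute $-t^2(t-1)^3$;
\item those reading $1,2,1$ contribute $t^2(t-1)^3$;
\item those reading $2,1,1$ contribute $0$.
\end{itemize}
The cancellation is between different placements of the values, so no involution that preserves $|\sigma|$ can prove (b). You need a genuinely separate argument for the $t$-triangularity.
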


 By \eqref{eq:1}, we have 
 \begin{equation}\label{eqn:JandH}
J_\mu (x;q,t)   = t^{n(\mu)}\tilde{H}_\mu [X(1-t);q,t^{-1}] 
               = t^{n(\mu)+n}\tilde{H}_{\mu'}[X(t^{-1} -1);t^{-1}, q].
 \end{equation}
 To state a combinatorial interpretation for \( J_\mu (x;q,t) \), we
 introduce the necessary definitions.

 We say that a cell \( u\in\mu \) \emph{attacks} the cells in the same
 row and strictly to its right, and the cells in the row below and
 strictly to its left. A filling \( \sigma:\mu\rightarrow \ZZ^+ \) is
 \emph{nonattacking} if there are no cells \( u, v\in \mu \) such that
 \( u \) attacks \( v \) and \( \sigma(u)=\sigma(v)\).

 Let \( \mathcal{A}_+=\{1,2,\dots\} \),
 \( \mathcal{A}_- =\{\overline{1},\overline{2},\dots\} \), and
 \( \mathcal{A}_{\pm}=\mathcal{A}_+\cup \mathcal{A}_- \). We write
 \( |i|=i \) and \( |\overline{i}|=i \). A \emph{superfilling} is an
 assignment \( \sigma:\mu\rightarrow \mathcal{A}_\pm \) of elements in
 \( \mathcal{A}_\pm \) to the cells in \( \mu \). We denote by
 \( |\sigma| \) the filling obtained from \( \sigma \) by replacing
 each entry \( \overline{i} \) with \( i \). We say that a
 superfilling \( \sigma \) is \emph{nonattacking} if \( |\sigma| \) is
 nonattacking. We denote by \( \neg(\sigma) \) the number of entries
 of \( \sigma \) in \( \mathcal{A}_- \). Let \( \NASF(\mu) \) denote
 the set of nonattacking superfillings of \( \mu \).

 Given a total order \( \prec \) on \( \mathcal{A}_\pm \), the
 \emph{standardization} of \( \sigma \) with respect to \( \prec \) is
 the bijective filling
 \( \sigma^{\mathrm{st}}_{\prec}:\mu\to\{1,\dots,n\} \) obtained by
 relabeling the cells in increasing order of their entries, where
 ties between cells holding the same letter are broken by reading
 order: a letter in \( \mathcal{A}_+ \) read earlier is regarded as
 smaller, and a letter in \( \mathcal{A}_- \) read earlier is regarded
 as larger.

Haglund, Haiman, and Loehr \cite[Eq.~(63)]{HHL2005} found a
combinatorial interpretation for \( J_\mu(x ; q, t) \). We will use
the following reformulation from \cite[(A.30)]{Haglund}.

\begin{thm}\label{thm:NASF}
  We have
\begin{equation}\label{eq:HHL}
  J_\mu(x ; q, t)=\sum_{\sigma\in \NASF(\mu')} x^{|\sigma|} 
    q^{\maj(\sigma)} t^{\coinv (\sigma)}(-t)^{\neg(\sigma)},
\end{equation}
where \( \maj (\sigma) = \maj(\sigma^{\mathrm{st}}_\prec) \) and
\( \coinv (\sigma)= \coinv (\sigma^{\mathrm{st}}_\prec) \), with
standardization taken with respect to the ordering
\begin{equation}\label{eq:A16}
1\prec \overline{1}\prec 2\prec \overline{2}\prec \cdots\prec n\prec \overline{n}\prec  \cdots.
\end{equation}
\end{thm}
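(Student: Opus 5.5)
Theorem~\ref{thm:NASF} is a reformulation of the Haglund--Haiman--Loehr formula, so the plan is to derive it from \Cref{thm:1} via \eqref{eqn:JandH}. Start from
\[
  J_\mu(x;q,t)=t^{n(\mu)+n}\tilde{H}_{\mu'}\bigl[X(t^{-1}-1);t^{-1},q\bigr].
\]
Apply \Cref{thm:1} to \( \mu' \), with the variables \( (q,t) \) replaced by \( (t^{-1},q) \). This gives \( \tilde{H}_{\mu'}(x;t^{-1},q)=\sum_{\sigma}x^\sigma t^{-\inv(\sigma)}q^{\maj(\sigma)} \) over fillings of \( \mu' \). Since \( \coinv=n(\mu)-\inv \) on \( \mu' \), the factor \( t^{n(\mu)} \) turns \( t^{-\inv} \) into \( t^{\coinv} \).

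Next, handle the plethystic substitution \( X\mapsto X(t^{-1}-1)=t^{-1}X-X \) by the standard superization. Because \( \maj \) and \( \inv \) depend only on the standardization, the sum can be organized as a sum over standard fillings weighted by fundamental quasisymmetric functions:
\[
  \tilde{H}_{\mu'}=\sum_{w}q^{\maj(w)}t^{-\inv(w)}F_{\mathrm{iDes}(w)}.
\]
Here \( w \) runs over standard fillings, read in reading order. Then use the super-analogue of \( F_D[X-Y] \), in the form \( F_D[\epsilon Y+X] \): it is a sum over superwords whose standardization with respect to any fixed interlacing order (here \eqref{eq:A16}, with the tie-breaking rules stated) has the correct inverse descent set. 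In that sum, negative letters contribute \( -x_i \) and positive letters contribute \( t^{-1}x_i \).

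For \( X(t^{-1}-1) \), the positive letters carry weight \( t^{-1} \), giving an overall \( t^{-n+\neg} \), and each negative letter carries \( -1 \). Combined with the prefactor \( t^{n} \), this gives \( t^{\neg}(-1)^{\neg}=(-t)^{\neg} \), as in \eqref{eq:HHL}. At this stage \( J_\mu \) is a sum over all superfillings of \( \mu' \), with weight \( q^{\maj}t^{\coinv}(-t)^{\neg} \) computed from the standardization.

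The main obstacle is the final step: reducing this sum to nonattacking superfillings. The plan is a sign-reversing involution on superfillings \( \sigma \) for which \( |\sigma| \) has an attacking pair. Take the first attacking pair \( u,v \) in a fixed order, with \( |\sigma(u)|=|\sigma(v)|=i \), and toggle the bar on the cell of the pair that comes first in reading order. This changes \( \neg \) by one and flips the sign. One must check that it preserves \( \maj \) and \( \coinv \) of the standardization, and that the factor of \( t \) also matches.

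For the weights, the point is that attacking cells are either in the same row or in adjacent rows with the specific diagonal offset. Swapping which of the two equal-magnitude letters counts as larger under the bar convention changes only the relative order of \( u \) and \( v \). That order affects no descent: \( u \) and \( v \) are never vertically adjacent. For triples, the contributions of triples containing both \( u \) and \( v \) trade off exactly, by the usual HHL case analysis. The power of \( t \) must also match, which requires the coinv count to change by one in the opposite direction to \( \neg \). I would check this first, carefully, for same-row pairs and for diagonal pairs, since those triples are exactly the ones whose orientation flips. This is precisely the content of \cite[Eq.~(63)]{HHL2005} and \cite[(A.30)]{Haglund}, which could also be cited directly.
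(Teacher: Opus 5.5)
The paper gives no proof of this statement. It quotes it from \cite[Eq.~(63)]{HHL2005} in the form of \cite[(A.30)]{Haglund}, so your closing remark that it can be cited directly is exactly what the paper does. Your derivation is sound up to the point where $J_\mu$ becomes the same weighted sum over all superfillings of $\mu'$: the passage to $\tilde H_{\mu'}[X(t^{-1}-1);t^{-1},q]$, the identity $\coinv=n(\mu)-\inv$ on $\mu'$, and the superization giving $(-t)^{\neg}$.

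\textbf{The gap is in the cancellation step.} You claim that toggling the bar on $u$ ``changes only the relative order of $u$ and $v$.'' This is false. With $a\prec\bar a$ and the stated tie-breaking, two cells of absolute value $a$ compare according to the sign of the one read earlier: that cell is smaller if unbarred and larger if barred. So toggling $u$ reverses its comparison with every later cell of absolute value $a$. These may include several cells attacked by $u$, and also $\South(u)$, which changes $\maj$.

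For an arbitrary ``first attacking pair in a fixed order,'' the map is therefore not weight-reversing. Take $\mu=(1^3)$ and $|\sigma|=aaa$ on the row $\mu'=(3)$. Then $\maj=0$ and $\coinv=\sum_{c_i\ \text{unbarred}}(3-i)$. The weight is $x_a^3w_1w_2w_3$, where $w_i=t^{3-i}$ if $c_i$ is unbarred and $w_i=-t$ if $c_i$ is barred. Toggling $c_1$, the earlier cell of the attacking pair $(c_1,c_2)$, multiplies the weight by $-t^{-1}$ or $-t$, not by $-1$. Only toggling $c_2$ cancels, which is why the total $\prod_i(t^{3-i}-t)$ vanishes. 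Your text also asks both that $\coinv$ be preserved and that it shift by one; only the second is correct.

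\textbf{The missing idea is the choice of cell.} Let $u$ be the last cell in reading order that attacks some cell of the same absolute value.
\begin{itemize}
\item This choice depends only on $|\sigma|$, so toggling the bar on $u$ is an involution.
\item By maximality, $u$ attacks exactly one such cell $v$. If it attacked two, then in each of the three position cases the earlier of them would attack the later.
\item $\South(u)$ has a different absolute value, since otherwise $v$ would attack $\South(u)$.
\end{itemize}
Now use the identity $\inv=|\mathrm{Inv}|-\sum_{c\in\Des}\arm(c)$ from \cite{HHL2005}, where $\mathrm{Inv}$ is the set of inversions among attacking pairs. This depends only on comparisons within attacking pairs and vertically adjacent pairs, as does $\maj$. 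Toggling $u$ changes only comparisons of $u$ with later cells, and among the relevant ones only the comparison with $v$ changes. Hence $\Des$ and $\maj$ are unchanged and $|\mathrm{Inv}|$ changes by exactly one. So $\coinv$ moves by $\mp1$ while $\neg$ moves by $\pm1$, the factor $t^{\coinv}(-t)^{\neg}$ changes sign, and the superfillings with attacking $|\sigma|$ cancel in pairs.
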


\section{One-row case}\label{sec:one-row-direct}

In this section, we give an explicit combinatorial formula for the
\( e \)-coefficients of \( J_{(n)}(x;q,t) \). We give two proofs of
this formula. We first define a weight of a composition with respect
to a one-row partition.

\begin{defn}
  For a composition \( \alpha=(\alpha_1,\dots,\alpha_k) \) of \( n \),
  let 
\begin{equation}\label{eq:wt(i,a)}
    \wt(i,\alpha) =
    \begin{cases}
      1-q^{i-1}t^{\alpha_j} & \mbox{if \( i= \alpha_1 + \cdots + \alpha_{j-1}+1 \) for some \( 1\le j\le k \)},\\
      q^{i-1}-1 & \mbox{otherwise.}
    \end{cases}
\end{equation}
For the one-row partition \( (n) \), define
\begin{equation}\label{eq:wt-one-row}
  \wt_{(n)}(\alpha) = \prod_{i=1}^{n} \wt(i,\alpha).
\end{equation}
\end{defn}
It is helpful to visualize the weight \( \wt_{(n)}(\alpha) \) as in the
next example.

\begin{exam}\label{ex:one-row-weight}
  Let \( n=10 \) and \( \alpha=(2,1,3,4) \). Draw a row divided into
  blocks of sizes \( \alpha_1,\alpha_2,\alpha_3,\alpha_4 \), that is,
  \( 2,1,3,4 \), from left to right, as illustrated in
  \Cref{fig:one-row-weight-example}. The first cells of the blocks have
  indices \( i=1,3,4,7 \). We have
  \[
    \begin{aligned}
      \wt_{(10)}(2,1,3,4)
      ={}&(1-t^2)(q-1)(1-q^2t)(1-q^3t^3)\\
      &{}\times(q^4-1)(q^5-1)(1-q^6t^4)(q^7-1)(q^8-1)(q^9-1).
    \end{aligned}
  \]
\end{exam}

\begin{figure}
  \centering
  \begin{tikzpicture}[x=1.3cm,y=0.66cm]
    \foreach \start/\length/\blocklabel in
      {0/2/{\alpha_1},2/1/{\alpha_2},
       3/3/{\alpha_3},6/4/{\alpha_4}}{
      \foreach \c in {1,...,\length}{
        \draw[line width=0.35pt]
          (\start+\c-1,0) rectangle (\start+\c,1);
      }
      \draw[line width=1pt] (\start,0) rectangle (\start+\length,1);
      \fill[black] (\start+0.5,0.5) circle[radius=2.5pt];
      \draw[decorate,decoration={brace,mirror,amplitude=4pt}]
        (\start+0.06,-0.15) -- (\start+\length-0.06,-0.15)
        node[midway,below=5pt] {\(\blocklabel\)};
    }
    \foreach \cell/\cellweight in
      {1/{1-t^2},2/{q-1},3/{1-q^2t},4/{1-q^3t^3},
       5/{q^4-1},6/{q^5-1},7/{1-q^6t^4},8/{q^7-1},
       9/{q^8-1},10/{q^9-1}}{
      \node[anchor=south,font=\footnotesize,inner sep=2pt]
        at (\cell-0.5,1) {\(\cellweight\)};
    }
  \end{tikzpicture}
  \caption{For \( \alpha=(2,1,3,4) \), a row divided into
    blocks of sizes \( \alpha_1,\alpha_2,\alpha_3,\alpha_4 \)
    from left to right.
    A dot marks the first cell of each block, and the weight
    \( \wt(i,\alpha) \) is written above the \( i \)th cell.}
  \label{fig:one-row-weight-example}
\end{figure}

\begin{thm}\label{thm:one-row}\label{thm:one-row-direct}
  For \( n\ge0 \), we have
  \[
    J_{(n)}(x;q,t) = \sum_{\lambda\vdash n}
    \left( \sum_{\alpha\sim \lambda} \wt_{(n)}(\alpha) \right) e_\lambda.
  \]
\end{thm}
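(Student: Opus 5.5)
The plan is to reduce $J_{(n)}$ to a classical one-variable generating function and extract a linear recurrence in $n$ whose unfolding produces exactly the composition sum. The recurrence will peel off the \emph{last} part $\alpha_k=j$ of a composition; this is the main point to get right.

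\textbf{Step 1: identify $J_{(n)}$.} For a one-row shape every cell has leg $0$ and arms $n-1,\dots,0$. Hence
\[
  \prod_{c\in(n)}\bigl(1-q^{\arm(c)}t^{\leg(c)+1}\bigr)=(t;q)_n .
\]
Recall Macdonald's formula $P_{(n)}=\frac{(q;q)_n}{(t;q)_n}g_n$ \cite[Ch.~VI]{Macdonald}, where
\[
  G(y):=\sum_{n\ge0}g_n y^n=\prod_{i}\prod_{k\ge0}\frac{1-tq^kx_iy}{1-q^kx_iy},
\]
that is, $g_n=h_n\bigl[X\tfrac{1-t}{1-q}\bigr]$. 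Combining this with \eqref{eq:J=P} gives $J_{(n)}=(q;q)_n\,g_n$. Alternatively one could derive this from \Cref{thm:NASF}, but the classical route is shorter.

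\textbf{Step 2: a functional equation.} Let $E(y)=\sum_{k}e_ky^k=\prod_i(1+x_iy)$. Then
\[
  G(y)=\prod_{k\ge0}\frac{E(-tq^ky)}{E(-q^ky)},
\qquad\text{so}\qquad
  G(y)\,E(-y)=G(qy)\,E(-ty).
\]
Comparing coefficients of $y^n$ gives
\[
  \sum_{j\ge0}(-1)^je_j\,g_{n-j}=\sum_{j\ge0}q^{n-j}(-t)^je_j\,g_{n-j}.
\]
Isolating the $j=0$ term yields
\[
  (1-q^n)g_n=\sum_{j=1}^n(-1)^{j-1}\bigl(1-q^{n-j}t^j\bigr)e_j\,g_{n-j}.
\]

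\textbf{Step 3: recurrence for $J$ and induction.} Multiply the last identity by $(q;q)_{n-1}$ and use $(q;q)_{n-1}/(q;q)_{n-j}=\prod_{i=n-j+1}^{n-1}(1-q^i)$. Absorbing the sign $(-1)^{j-1}$ turns the $j-1$ factors $(1-q^i)$ into $(q^i-1)$, so
\[
  J_{(n)}=\sum_{j=1}^n e_j\,J_{(n-j)}\bigl(1-q^{n-j}t^j\bigr)\prod_{i=n-j+1}^{n-1}(q^i-1).
\]
Now compare with $\wt_{(n)}(\alpha)$ for $\alpha=(\beta,j)$ with $\beta\vDash n-j$. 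The cells $n-j+1,\dots,n$ form the last block, which starts at $i=n-j+1$. Its first cell contributes $1-q^{n-j}t^{j}$, and the remaining cells $i=n-j+2,\dots,n$ contribute $q^{i-1}-1$, i.e.\ $q^{n-j+1}-1,\dots,q^{n-1}-1$. The weights of the first $n-j$ cells depend only on $\beta$ and equal $\wt_{(n-j)}(\beta)$. Therefore
\[
  \wt_{(n)}(\beta,j)=\wt_{(n-j)}(\beta)\,\bigl(1-q^{n-j}t^j\bigr)\prod_{i=n-j+1}^{n-1}(q^i-1).
\]
Induction on $n$, with base case $J_{(0)}=1=\wt_{(0)}(\emptyset)$, then gives $J_{(n)}=\sum_{\alpha\vDash n}\wt_{(n)}(\alpha)e_\alpha$. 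Grouping compositions by their underlying partition yields the theorem.

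The only delicate point is matching indices and signs in Step 3, since the recurrence must peel off the last part with the correct power $q^{n-j}$. Beyond that, the argument is routine once $J_{(n)}=(q;q)_ng_n$ is in hand.
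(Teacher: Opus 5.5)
Your proof is correct and is essentially the paper's second (recursive) proof: it uses the same identification $J_{(n)}=(q;q)_n g_n$, the same functional equation (the paper's $(1-v)A(v)=(1-tv)A(qv)$ with $A(v)=(tv;q)_\infty/(v;q)_\infty$, multiplied over $v=x_iu$), and the same recurrence obtained by peeling off the last part of the composition. The paper also gives an independent combinatorial proof via the Haglund--Haiman--Loehr formula, but your route follows its recursive proof step for step.
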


To prove this theorem, it is convenient to use the following notation.
For a composition \( \alpha \), write
\( e_\alpha=\prod_j e_{\alpha_j} \) and let
\begin{equation}\label{eq:Y-one-row}
  Y_{(n)}=Y_{(n)}(x;q,t)
  =\sum_{\alpha\vDash n}\wt_{(n)}(\alpha)e_\alpha.
\end{equation}
Then the identity in \Cref{thm:one-row-direct} is equivalent to
\begin{equation}\label{eq:J=Y}
  J_{(n)}(x;q,t) = Y_{(n)}(x;q,t).
\end{equation}
We provide two proofs of this identity in the next two subsections.
The first proof is combinatorial and the second proof is recursive.

\subsection{Combinatorial proof}
\label{sec:combinatorial-proof}

In this subsection, we prove \eqref{eq:J=Y} by comparing monomial
coefficients on both sides using the Haglund--Haiman--Loehr formula
\eqref{eq:HHL}.

For the one-row partition \( \mu=(n) \), we can identify each
superfilling of \( \mu'=(1^n) \) with its reading word. Using this
identification, we first restate \eqref{eq:HHL} in terms of words on
\( \mathcal{A}_{\pm} \). We denote by \( W_n(\mathcal{A}_{\pm}) \) the
set of words of length \( n \) on \( \mathcal{A}_{\pm} \). We define
\( W_n(\mathcal{A}_{+}) \) and \( W_n(\mathcal{A}_{-}) \) similarly.
For \( \pi=\pi_1 \cdots \pi_n \in W_n(\mathcal{A}_{\pm}) \), we define
\( |\pi|=|\pi_1| \cdots |\pi_n| \in W_n(\mathcal{A}_{+}) \). For
\( \pi=\pi_1 \cdots \pi_n \in W_n(\mathcal{A}_{+}) \), we write
\( x^\pi = x_{\pi_1} \cdots x_{\pi_n} \).

For \( \pi\in W_n(\mathcal{A}_{\pm}) \), let \( \neg(\pi) \) denote the
number of letters of \( \pi \) in \( \mathcal{A}_- \). We define
\[
  \Des(\pi)=\{1\le i\le n-1:\pi_i\succ\pi_{i+1}
    \text{ or }\pi_i=\pi_{i+1}\in\mathcal A_-\},
  \qquad \maj(\pi)=\sum_{i\in \Des(\pi)}i,
\]
where \( \prec \) is the ordering of \( \mathcal{A}_{\pm} \)
in \eqref{eq:A16}.
For later use, we also define
\[
  \Des^*(\pi)=\{1\le i\le n-1:\pi_i\succ^*\pi_{i+1}
    \text{ or }\pi_i=\pi_{i+1}\in\mathcal A_-\},
  \qquad \maj^*(\pi)=\sum_{i\in \Des^*(\pi)}i,
\]
where \( \prec^* \) is the ordering of \( \mathcal{A}_{\pm} \)
given by
\begin{equation}\label{eq:3}
  \overline{1}\prec^* \overline{2}\prec^*
  \cdots\prec^* 1\prec^* 2 \prec^* \cdots.
\end{equation}

Now consider \( \sigma\in \NASF((1^n)) \) and let \( \pi \) be the
reading word of \( \sigma \). Then \( \maj(\sigma)=\maj(\pi) \) and
\( \neg(\sigma)=\neg(\pi) \). Since the one-column partition \( (1^n) \)
contains no attacking pairs, the map \( \sigma\mapsto\pi \) is a
bijection from \( \NASF((1^n)) \) to \( W_n(\mathcal{A}_{\pm}) \). Moreover,
since \( (1^n) \) has no triples, we always have
\( \coinv(\sigma)=0 \). Therefore, \eqref{eq:HHL} gives
\begin{equation}\label{eq:one-row-hhl-local}
  J_{(n)}(x;q,t)
  =\sum_{\pi\in W_n(\mathcal{A}_{\pm})}x^{|\pi|} q^{\maj(\pi)}(-t)^{\neg(\pi)}.
\end{equation}

\begin{lem}\label{lem:maj*}
  We have
 \[
    Y_{(n)}(x;q,t)
  =\sum_{\pi\in W_n(\mathcal{A}_{\pm})}x^{|\pi|} q^{\maj^*(\pi)}(-t)^{\neg(\pi)}.
  \]
\end{lem}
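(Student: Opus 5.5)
The plan is to prove the lemma at the level of monomials, using an expansion of the weight followed by a sign-reversing involution. Write the right-hand side as \( Z_n \). Expand \( Y_{(n)} \) completely:
\[
  \wt_{(n)}(\alpha)=\prod_{j}\Bigl(1-q^{s_j-1}t^{\alpha_j}\Bigr)\prod_{i\notin\{s_j\}}(q^{i-1}-1),
\]
where \( s_j=\alpha_1+\cdots+\alpha_{j-1}+1 \) are the block starts. Also write each \( e_{\alpha_j} \) as a sum over strictly increasing positive words of length \( \alpha_j \).

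A term of \( Y_{(n)} \) is then a decorated word with the following data.
\begin{itemize}
\item A composition \( \alpha \) cuts the word into blocks, each block strictly increasing in \( |\cdot| \).
\item Each block is labelled "plain" (factor \( 1 \)) or "barred" (factor \( -q^{s_j-1}t^{\alpha_j} \)).
\item Each non-start position \( i \) chooses either \( q^{i-1} \) or \( -1 \).
\end{itemize}
The factor \( t^{\alpha_j}(-1) \) of a barred block should be read as turning every letter of that block into a negative letter. To match the sign \( (-t)^{\alpha_j} \), the remaining \( (-1)^{\alpha_j-1} \) has to come from the non-start positions of that block. So the first step is to reorganize: inside a barred block, re-pair the factors \( (q^{i-1}-1) \) with the letters so that the block becomes a word in \( \mathcal A_- \). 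Under \( \prec^* \), such a word read in reverse is strictly decreasing, and every adjacent pair in it is a \( \Des^* \) position.

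Then I compare with \( Z_n \) by writing \( q^{\maj^*(\pi)}=\prod_{i\in\Des^*(\pi)}q^{i} \). The key observation is about the ascents of \( \pi \) under \( \prec^* \) between positive letters: there \( \pi_i\prec^*\pi_{i+1} \) means \( |\pi_i|<|\pi_{i+1}| \), which is exactly the within-block condition of \( e_m \). So I would factor the word \( \pi \) canonically into maximal \( \prec^* \)-ascending positive runs and maximal negative runs, noting that the first letter of a run follows a \( \Des^* \) position or a sign change. The alternating terms \( -1 \) in \( q^{i-1}-1 \) are then to be interpreted as inclusion–exclusion over where the coarse blocks of \( \alpha \) are cut relative to the finest factorization.

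The involution is defined on decorated words in which some block boundary of \( \alpha \) is not forced, meaning the two adjacent blocks could be merged while keeping the block strictly increasing. On such a word it toggles, at the first such boundary \( i \), between "cut here with block-start factor" and "merge, with factor \( -1 \) at position \( i \)". This changes the sign and preserves the monomial and the \( q,t \)-weight.

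The fixed points should be exactly the decorated words whose surviving choices of \( q^{i-1} \) sit precisely at the \( \Des^* \) positions of the underlying signed word \( \pi \). These are in bijection with \( W_n(\mathcal A_\pm) \) with weight \( q^{\maj^*(\pi)}(-t)^{\neg(\pi)} \). Note the shift: a block start at \( s_j \) contributes \( q^{s_j-1} \), i.e. a descent at position \( s_j-1 \), consistent with \( \maj^* \).

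As a fallback, or as a check, I would prove the identity by recursion on \( n \). Conditioning on the last block \( \alpha_k=m \) gives
\[
  Y_{(n)}=\sum_{m}Y_{(n-m)}\,(1-q^{n-m}t^m)\prod_{i=n-m+2}^{n}(q^{i-1}-1)\,e_m,
\]
and one would show that \( Z_n \) obeys the same recursion by splitting \( \pi \) at its last run.

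The main obstacle is the barred blocks. A negative block has to be weakly decreasing in \( |\cdot| \), with equal negatives counted as descents, whereas \( e_m \) produces strictly increasing sets. So the naive "barred block = negative block" identification fails. One needs the \( q^{i-1}-1 \) factors inside a barred block to absorb the difference, in the manner of the standard identity \( \sum_{\text{words}}q^{\maj}=\prod(1-q^i)^{-1}\cdot(\ldots) \) relating \( e_m \) and \( h_m \) via \( (-t)^m \). I would first verify this on a single block, i.e. the case \( \alpha=(n) \) with all letters negative, before setting up the general involution.
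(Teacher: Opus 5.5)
There is a genuine gap: your block orientation is reversed for both signs, and as a result the fixed points do not have the description you claim.

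\textbf{Plain blocks.} You expand $e_{\alpha_j}$ over strictly \emph{increasing} words, so the internal gaps of a plain block are $\prec^*$-ascents. Your toggle cancels the $-1$ choices there against cuts. The surviving internal positions therefore carry $q^{i-1}$, which puts the powers of $q$ at ascents rather than at $\Des^*$. Already for $n=2$, monomial $x_1x_2$, $t^0$: cancel $\alpha=(1,1)$ with word $12$ against $\alpha=(2)$ with word $12$ and factor $-1$. The survivors are $12$ with weight $q$ and $21$ with weight $1$, whereas $\maj^*(12)=0$ and $\maj^*(21)=1$. The totals agree only through an ascent/descent equidistribution that you do not prove.

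\textbf{Barred blocks.} A barred block of length $m$ starting at $s_j$ has weight $q^{s_j-1}(-t)^m\prod(1-q^{i-1})$. As your own sign count shows, the term matching $(-t)^{m}$ takes $-1$ at every internal position of $q^{i-1}-1$, so it has no $q$ at internal gaps. Hence a negative block must be a $\prec^*$-\emph{ascending} run, strictly increasing in $|\cdot|$, not the reversed strictly decreasing word you propose.

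\textbf{The toggle.} The toggle ``cut with block-start factor $\leftrightarrow$ merge with factor $-1$'' also fails to preserve the $q$-weight when both blocks are barred. The new barred start contributes an extra $q^{i-1}$, so the correct partner of the cut is the $q^{i-1}$ term of $q^{i-1}-1$. In the $t^2$-part of the same example:
\begin{itemize}
\item the cut term ($\alpha=(1,1)$, word $12$) is $qt^2$, while merge-with-$(-1)$ is $+t^2$, so these do not cancel;
\item the correct partner is the $-qt^2$ term;
\item after the correct pairing, the survivors are $\alpha=(2)$ with $12$ (weight $t^2$) and $\alpha=(1,1)$ with $21$ (weight $qt^2$);
\item your reversal rule sends both to $\overline{2}\,\overline{1}$, so you do not even get a bijection with $W_2(\mathcal{A}_\pm)$.
\end{itemize}

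\textbf{The ``main obstacle'' is an artifact of this orientation.} Equal negative letters are $\Des^*$-positions, so the ascending runs of $\pi$ under $\prec^*$ inside $\mathcal{A}_-$ are strictly increasing in $|\cdot|$ and match $e_m$ exactly. No $e$/$h$ identity is needed.

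\textbf{The repair, which is the paper's proof.} Expand $e_m$ over strictly \emph{decreasing} positive words and use
\[
(1-q^{r-1}t^{m})\prod_{j=r+1}^{r+m-1}(q^{j-1}-1)=\prod_{j=r+1}^{r+m-1}(q^{j-1}-1)+q^{r-1}(-t)^{m}\prod_{j=r+1}^{r+m-1}(1-q^{j-1}).
\]
Read the second term as replacing the block by its reversed barred word, which is strictly increasing in $|\cdot|$. Then, for a fixed signed word $\pi$, the sum over compositions factors over the $n-1$ gaps. At each gap, the weight of cutting plus the weight of not cutting equals $q^i$ if $i\in\Des^*(\pi)$ and $1$ otherwise. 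No involution is needed; a corrected toggle would just be a bijective rendering of this per-gap cancellation.

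Your fallback recursion is not an independent route either. Showing that the right-hand side obeys the last-block recursion requires exactly this inclusion--exclusion over cuts.
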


\begin{proof}
  For a composition \( \alpha = (\alpha_1,\dots,\alpha_k)\vDash n \)
  and \( 1\le i\le k \), we define the \( i \)th \emph{block} of
  \( \alpha \) to be
  \[
    B_i(\alpha) = \{\alpha_1 + \cdots + \alpha_{i-1}+1,
    \alpha_1 + \cdots + \alpha_{i-1}+2,\dots,
    \alpha_1 + \cdots + \alpha_i\}.
  \]
  By definition, we have
  \begin{equation}\label{eq:5}
    Y_{(n)}(x;q,t) =\sum_{\alpha\vDash n}\wt_{(n)}(\alpha)e_\alpha
    = \sum_{(\alpha,\pi)\in C_n} \wt_{(n)}(\alpha) x^\pi ,
  \end{equation}
  where \( C_n \) is the set of pairs \( (\alpha,\pi) \) such that
  \( \alpha = (\alpha_1,\dots,\alpha_k)\vDash n \),
  \( \pi=\pi_1 \cdots \pi_n\in W_n(\mathcal{A}_{+}) \), and
  the subword of \( \pi \) at positions in \( B_i(\alpha) \) is
  strictly decreasing for each \( 1\le i\le k \). 

  Now we reformulate \eqref{eq:5} using words on
  \( \mathcal{A}_{\pm} \). Writing
  \( B_i(\alpha) = \{r_i,r_i+1,\dots,s_i\} \), we have
  \begin{align*}
    \wt_{(n)}(\alpha)
    &= \prod_{i=1}^{k} \left((1-q^{r_i-1}t^{\alpha_i}) \prod_{j=r_i+1}^{s_i}(q^{j-1}-1)\right)\\
    &= \prod_{i=1}^{k} \left(
\prod_{j=r_i+1}^{s_i}(q^{j-1}-1) + q^{r_i-1}(-t)^{\alpha_i} \prod_{j=r_i+1}^{s_i}(1-q^{j-1})\right).
  \end{align*}
  Choosing the first or second term in the \( i \)th factor can be
  interpreted as leaving the subword \( \pi_{r_i} \cdots \pi_{s_i} \)
  unchanged or replacing it with
  \( \overline{\pi_{s_i}} \cdots \overline{\pi_{r_i}} \),
  respectively. Using this idea, we define \( D_n \) to be the set of
  pairs \( (\alpha,\pi) \) such that
  \( \alpha = (\alpha_1,\dots,\alpha_k)\vDash n \),
  \( \pi=\pi_1 \cdots \pi_n \in W_n(\mathcal{A}_{\pm}) \), and for each
  \( 1\le i\le k \), the subword of \( \pi \) at positions in
  \( B_i(\alpha) \) is either a strictly decreasing word on
  \( \mathcal{A}_{+} \) or a strictly increasing word on
  \( \mathcal{A}_{-} \) with respect to the order \( \prec \) in
  \eqref{eq:A16}. For \( (\alpha,\pi)\in D_n \), we define
  \[
    \wt(\alpha,\pi) =  \prod_{i=1}^{n} \wt(i,\alpha,\pi),
  \]
  where
 \[
    \wt(i,\alpha,\pi) =
    \begin{cases}
      1 & \mbox{if \( i= \min(B_j(\alpha)) \) for some \( 1\le j\le \ell(\alpha) \)
                              and \( \pi_i\in \mathcal{A}_+ \)},\\
      q^{i-1}(-t)^{\alpha_j} & \mbox{if \( i= \min(B_j(\alpha)) \) for some \( 1\le j\le \ell(\alpha) \)
                              and \( \pi_i\in \mathcal{A}_- \)},\\
      q^{i-1}-1 & \mbox{if \( i\ne \min(B_j(\alpha)) \) for all \( 1\le j\le \ell(\alpha) \)
                              and \( \pi_i\in \mathcal{A}_+ \)},\\
      1-q^{i-1} & \mbox{if \( i\ne \min(B_j(\alpha)) \) for all \( 1\le j\le \ell(\alpha) \)
                              and \( \pi_i\in \mathcal{A}_- \)}.
    \end{cases}
\]
Then \eqref{eq:5} can be rewritten as
\[
     Y_{(n)}(x;q,t)
    = \sum_{(\alpha,\pi)\in D_n} \wt(\alpha,\pi) x^{|\pi|}
    = \sum_{\pi\in W_n(\mathcal{A}_{\pm})}x^{|\pi|}
    \sum_{\alpha:(\alpha,\pi)\in D_n} \wt(\alpha,\pi) ,
\]
where \( x^{|\pi|} = x_{|\pi_1|} \cdots x_{|\pi_n|} \).
Therefore, it suffices to show that for each
\( \pi\in W_n(\mathcal{A}_{\pm}) \),
\begin{equation}\label{eq:6}
  \sum_{\alpha\vDash n} \wt(\alpha,\pi)
  = q^{\maj^*(\pi)}(-t)^{\neg(\pi)},
\end{equation}
where we extend the definition of \( \wt(\alpha,\pi) \) to all
compositions \( \alpha\vDash n \) by setting
\( \wt(\alpha,\pi) = 0 \) unless \( (\alpha,\pi)\in D_n \).

Identifying a composition \( \alpha = (\alpha_1,\dots,\alpha_k) \)
with its blocks \( B_1(\alpha),\dots,B_k(\alpha) \), we can construct
a composition of \( n \) by choosing whether \( i \) and \( i+1 \)
belong to the same block for each \( 1\le i\le n-1 \). We say that
there is a \emph{cut} between \( i \) and \( i+1 \) if they belong to
different blocks. Note that the cuts can be chosen independently, so
there are \( 2^{n-1} \) compositions of \( n \) in total.

Now we fix \( \pi\in W_n(\mathcal{A}_{\pm}) \). We define the weights
\( c_i(\pi) \) and \( d_i(\pi) \) for cutting and not cutting,
respectively, between \( i \) and \( i+1 \) according to the
signs of \( \pi_i \) and \( \pi_{i+1} \), as follows:
  \[
    \begin{array}{c|c|c}
      \text{signs of }(\pi_i,\pi_{i+1}) & c_i(\pi) & d_i(\pi) \\ \hline
      (+,+)&1&(q^i-1)\chi(|\pi_i|>|\pi_{i+1}|)\\
      (-,-)&q^i&(1-q^i)\chi(|\pi_i|<|\pi_{i+1}|)\\
      (+,-)&q^i&0\\
      (-,+)&1&0
    \end{array}
  \]
  Here \( \chi(P)=1 \) if \( P \) holds and \( \chi(P)=0 \) otherwise.
  Then, by the construction and the definition of
  \( \wt(\alpha,\pi) \), we have
  \begin{equation}\label{eq:7}
    \sum_{\alpha\vDash n} \wt(\alpha,\pi) =
    (-t)^{\neg(\pi)} \prod_{i=1}^{n-1} (c_i(\pi) + d_i(\pi)).
  \end{equation}
  On the other hand, for each \( 1\le i\le n-1 \), we have
  \[
    c_i(\pi) + d_i(\pi) =
    \begin{cases}
     q^i & \mbox{if \( \pi_i,\pi_{i+1}\in \mathcal{A}_+ \) and \( \pi_i>\pi_{i+1} \)},\\
     1 & \mbox{if \( \pi_i,\pi_{i+1}\in \mathcal{A}_+ \) and \( \pi_i\le \pi_{i+1} \)},\\
     1 & \mbox{if \( \pi_i,\pi_{i+1}\in \mathcal{A}_- \) and \( |\pi_i|< |\pi_{i+1}| \)},\\
     q^i & \mbox{if \( \pi_i,\pi_{i+1}\in \mathcal{A}_- \) and \( |\pi_i|\ge |\pi_{i+1}| \)},\\
     q^i & \mbox{if \( \pi_i\in \mathcal{A}_+ \) and \( \pi_{i+1}\in \mathcal{A}_- \),}\\
     1 & \mbox{if \( \pi_i\in \mathcal{A}_- \) and \( \pi_{i+1}\in \mathcal{A}_+ \).}
    \end{cases}
  \]
  This implies that \( c_i(\pi) + d_i(\pi) \) is equal to \( q^i \) if
  \( i\in\Des^*(\pi) \) and \( 1 \) otherwise.
  Therefore, \eqref{eq:6} follows from \eqref{eq:7}.
\end{proof}

For a composition \( \beta=(\beta_1,\dots,\beta_d)\vDash n \), let
\( B(\beta) \) be the set of words
\( \pi \in W_n(\mathcal{A}_{\pm}) \) in which \( i \) and
  \( \overline i \) occur \( \beta_i \) times in total for each
  \( 1\le i\le d \). Equivalently, \( \pi\in B(\beta) \) if and only if
  \( x^{|\pi|} = x^\beta \).

\begin{lem}\label{lem:maj*=maj}
  For \( \beta\vDash n \), we have
 \[
    \sum_{\pi\in B(\beta)}q^{\maj(\pi)}(-t)^{\neg(\pi)}
    =\sum_{\pi\in B(\beta)}q^{\maj^*(\pi)}(-t)^{\neg(\pi)}.
  \]
\end{lem}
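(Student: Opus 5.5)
The plan is to prove something stronger than \Cref{lem:maj*=maj}: for a fixed multiset of signed letters, the generating function of \( q^{\maj} \) over all words with exactly that content is independent of the total order used to define descents. Concretely, fix \( \beta\vDash n \) and split \( B(\beta) \) according to the number \( m_i \) of copies of \( i \) and \( n_i \) of \( \overline i \), where \( m_i+n_i=\beta_i \). On each piece \( \neg(\pi)=\sum_i n_i \) is constant. So it suffices to show that, for each fixed multiset \( M \) with these multiplicities,
\[
  \sum_{\pi} q^{\maj(\pi)}=\sum_{\pi} q^{\maj^*(\pi)},
\]
with both sums over the rearrangements \( \pi \) of \( M \).

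I would prove a general statement. Take a finite alphabet carrying any total order. Declare each letter either \emph{weak}, meaning equal adjacent copies are not a descent (these are the letters of \( \mathcal A_+ \)), or \emph{strict}, meaning equal adjacent copies are a descent (the letters of \( \mathcal A_- \)). Then over rearrangements of a multiset in which letter \( a \) occurs \( c_a \) times,
\[
  \sum_\pi q^{\maj(\pi)} = q^{\sum_{a\ \mathrm{strict}}\binom{c_a}{2}}\,\frac{[n]_q!}{\prod_a [c_a]_q!} .
\]
The right-hand side does not involve the order. Applying this to \( \prec \) and to \( \prec^* \), whose descent sets \( \Des \) and \( \Des^* \) both use the same weak/strict convention, gives the lemma immediately.

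The general statement would be proved by induction on the number of distinct letters, removing all copies of the largest letter \( z \). Let \( w \) be a word of length \( m \) in the remaining letters and let \( k=c_z \).

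\emph{Weak \( z \).} This is MacMahon's classical insertion argument. Label the \( m+1 \) gaps of \( w \) so that inserting a single maximal letter into the gap with label \( j \) raises \( \maj \) by exactly \( j \): the descents of \( w \) are labelled from right to left, then the remaining gaps from left to right. Inserting \( k \) copies of \( z \), with several allowed in one gap, then corresponds to a multiset of labels. The increase in \( \maj \) is the sum of these labels, which produces the factor \( \qbinom{m+k}{k} \).

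\emph{Strict \( z \).} Here consecutive copies of \( z \) create extra descents, and this case is the main obstacle. My first attempt is to check that the same gap-labelling still works: a run of \( r \) copies of \( z \) placed in one gap contributes an extra \( \binom{r}{2} \) plus a shift. I would then show that the resulting statistic equals the weak-case statistic plus \( \binom{k}{2} \), through an explicit bijection on multisets of labels, for example \( j_1\le\dots\le j_k\mapsto j_1<j_2+1<\dots<j_k+k-1 \), the standard conversion between weak and strict partitions.

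If that bookkeeping becomes messy, my fallback for the strict case is standardization. Replace the \( k \) strict copies of \( z \) by distinct letters \( z_1<\dots<z_k \), arranged so that they are decreasing from left to right. This preserves \( \maj \) and is a bijection onto words in which the \( z_i \) appear in decreasing order. Then compare with the classical \( q \)-multinomial for distinct letters, using the inverse-major-index symmetry of permutations (Foata--Schützenberger) to see that fixing a decreasing order of the \( z_i \) contributes exactly \( q^{\binom{k}{2}}/[k]_q! \) relative to the total.
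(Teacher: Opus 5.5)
\textbf{There is a genuine gap in the strict step.}

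Your reduction to a fixed multiset is fine. Your closed formula $q^{\sum_{a\ \mathrm{strict}}\binom{c_a}{2}}[n]_q!/\prod_a[c_a]_q!$ is in fact true for every total order, so it would imply the lemma. The gap is that the step carrying all the content, inserting copies of a strict maximal letter, is never proved.

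Your first attempt reuses a gap-labelling mechanism that is already false pointwise in the weak case. Take $w=11$ and two weak copies of $z=2$. The single-insertion labels of the gaps before, between and after the $1$'s are $1,2,0$. But both copies in the middle gap give $1221$ with $\maj=3\neq 2+2$, and one copy in each of the first two gaps gives $2121$ with $\maj=4\neq 1+2$. The weak factor $q^{\maj(w)}\qbinom{m+k}{k}$ is still correct and classical, since the insertion sum depends only on $\Des(w)$; it just does not come from that correspondence. The strict case has the same problem. With $w=1$, the multisets $\{0,0\},\{0,1\},\{1,1\}$ produce $122,212,221$ with $\maj=2,1,3$, against $1,2,3$ for the weak statistic plus $\binom22$. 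The map $j_1\le\cdots\le j_k\mapsto j_1<j_2+1<\cdots<j_k+k-1$ merely encodes $q^{\binom k2}\qbinom{m+k}{k}=\sum_{0\le i_1<\cdots<i_k\le m+k-1}q^{i_1+\cdots+i_k}$. Nothing you wrote ties strict insertions to $k$-subsets in a $\maj$-compatible way, and that is exactly the missing bookkeeping.

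Your fallback asserts that, for fixed $w$, the placements of $z_1<\cdots<z_k$ in decreasing order carry the fraction $q^{\binom k2}/[k]_q!$ of the total $q^{\maj(w)}[m+1]_q\cdots[m+k]_q$. That is true, but it is precisely the shuffle identity $\sum_{\sigma\in\sh(u,v)}q^{\maj(\sigma)}=q^{\maj(u)+\maj(v)}\qbinom{m+k}{k}$ (for $u,v$ on disjoint alphabets, of lengths $m,k$) with $v$ decreasing. In other words, it is the fact from Stanley's EC1 that the paper itself invokes. Foata--Sch\"utzenberger does not give it directly, because fixing $w$ is not a condition on $\Des(\sigma^{-1})$.

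What does work is to drop the induction and argue globally:
\begin{enumerate}
\item Standardize all letters: copies of a weak letter increasing, copies of a strict letter decreasing from left to right. This preserves $\Des$.
\item The resulting permutations are exactly those $\sigma$ whose inverse is increasing on each weak block of consecutive values and decreasing on each strict block. This is a union of inverse descent classes.
\item Foata's bijection $\Phi$ satisfies $\inv(\Phi(\sigma))=\maj(\sigma)$ and $\Des(\Phi(\sigma)^{-1})=\Des(\sigma^{-1})$, so on this set $\maj$ may be replaced by $\inv$.
\item On this set, $\inv(\sigma)$ is the inversion number of the underlying word plus $\sum_{a\ \mathrm{strict}}\binom{c_a}{2}$. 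MacMahon's inversion count then gives your formula for any order.
\end{enumerate}

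With that repair your route is genuinely different from the paper's. The paper never evaluates the sum; it fixes $\pi^+,\pi^-$ and uses only that $\prec$ and $\prec^*$ agree on $\mathcal A_+$ and on $\mathcal A_-$. You would instead obtain an explicit order-free formula, at the price of the Foata--Sch\"utzenberger theorem.
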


\begin{proof}
  Let \( \beta=(\beta_1,\dots,\beta_d)\vDash n \). For a word
  \( \pi^+ \) on \( \mathcal{A}_+ \) and a word \( \pi^- \) on
  \( \mathcal{A}_- \), a \emph{shuffle} of \( \pi^+ \) and \( \pi^- \)
  is a word \( \pi \) on \( \mathcal{A}_{\pm} \) such that the subword
  of \( \pi \) consisting of letters in \( \mathcal{A}_+ \)
  (resp.~\( \mathcal{A}_- \)) is equal to \( \pi^+ \)
  (resp.~\( \pi^- \)). Let \( \sh(\pi^+,\pi^-) \) denote the set of
  such shuffles. Let \( V(\beta) \) be the set of pairs
  \( (\pi^+,\pi^-) \) of words on \( \mathcal{A}_+ \) and
  \( \mathcal{A}_- \), respectively, such that, for each
  \( 1\le i\le d \), the total number of occurrences of \( i \) in
  \( \pi^+ \) and \( \overline{i} \) in \( \pi^- \) is \( \beta_i \). Then we
  have
  \begin{equation}\label{eq:2}
    \sum_{\pi\in B(\beta)}q^{\maj(\pi)}(-t)^{\neg(\pi)}
    =\sum_{(\pi^+,\pi^-)\in V(\beta)}(-t)^{k}
      \sum_{\pi\in\sh(\pi^+,\pi^-)}q^{\maj(\pi)},
  \end{equation}
  where \( k \) is the total number of letters in \( \pi^- \).
  Since the identity \eqref{eq:2} with each \( \maj \) replaced by
  \( \maj^* \) also holds, it suffices to show that for
  \( (\pi^+,\pi^-)\in V(\beta) \), we have
  \begin{equation}\label{eq:4}
    \sum_{\pi\in\sh(\pi^+,\pi^-)}q^{\maj(\pi)}
    = \sum_{\pi\in\sh(\pi^+,\pi^-)}q^{\maj^*(\pi)}.
  \end{equation}

  Let \( \sigma^+ \) and \( \sigma^- \) (resp.~\( \tau^+ \) and
  \( \tau^- \)) be the words such that the concatenation
  \( \sigma^+\sigma^- \) (resp.~\( \tau^+\tau^- \)) is the
  standardization of \( \pi^+\pi^- \) with respect to the order
  \( \prec \) in \eqref{eq:A16} (resp.~\( \prec^* \) in \eqref{eq:3}).
  Then by the definitions of \( \maj \) and \( \maj^* \) and by the
  well-known fact \cite[Ch.~3, Exercise~162(b)]{EC1}, we have
  \begin{align*}
    \sum_{\pi\in\sh(\pi^+,\pi^-)}q^{\maj(\pi)}
    &= \sum_{\sigma\in\sh(\sigma^+,\sigma^-)}q^{\maj(\sigma)}
      =q^{\maj(\sigma^+)+\maj(\sigma^-)}\qbinom{n}{k},\\
    \sum_{\pi\in\sh(\pi^+,\pi^-)}q^{\maj^*(\pi)}
    &= \sum_{\tau\in\sh(\tau^+,\tau^-)}q^{\maj(\tau)}
      =q^{\maj(\tau^+)+\maj(\tau^-)}\qbinom{n}{k}.
  \end{align*}
  Since the orderings \eqref{eq:A16} and \eqref{eq:3} agree on
  \( \mathcal{A}_+ \) and on \( \mathcal{A}_- \), we
  have \( \maj(\sigma^+) = \maj(\tau^+) \) and
  \( \maj(\sigma^-) = \maj(\tau^-) \). Therefore, we obtain
  \eqref{eq:4}, which completes the proof.
\end{proof}

Now we prove \eqref{eq:J=Y}.

\begin{proof}[First proof of \eqref{eq:J=Y}]
  For every \( \beta\vDash n \), \eqref{eq:one-row-hhl-local}
  and \Cref{lem:maj*=maj,lem:maj*} give
  \[
    [x^\beta]J_{(n)}
    =\sum_{\pi\in B(\beta)}q^{\maj(\pi)}(-t)^{\neg(\pi)}
    =\sum_{\pi\in B(\beta)}q^{\maj^*(\pi)}(-t)^{\neg(\pi)}
    =[x^\beta]Y_{(n)}.
  \]
  Since both sides of \eqref{eq:J=Y} are symmetric functions, the
  identity follows.
\end{proof}

\subsection{Recursive proof}
\label{sec:recursive-proof}

In this subsection, we give a recursive proof of \eqref{eq:J=Y}. We
first recall the generating function of \( J_{(n)}(x;q,t) \).

\begin{lem}\label{lem:one-row-generating}
  We have
  \[
    \sum_{n\ge0}\frac{J_{(n)}(x;q,t)}{(q;q)_n}u^n
    =\prod_{i\ge1}A(x_i u),
    \qquad
    A(v):=\sum_{m\ge0}\frac{(t;q)_m}{(q;q)_m}v^m.
  \]
\end{lem}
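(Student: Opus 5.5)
The plan is to start from the Haglund--Haiman--Loehr expression \eqref{eq:one-row-hhl-local}, which writes $J_{(n)}$ as a sum over words on $\mathcal{A}_\pm$ weighted by $q^{\maj(\pi)}(-t)^{\neg(\pi)}$, and to compute its monomial coefficients. Fix a monomial $x^\beta$ with $\beta\vDash n$. By \eqref{eq:2} and \eqref{eq:4} in the proof of \Cref{lem:maj*=maj}, the coefficient $[x^\beta]J_{(n)}$ factors over the choice of a positive word and a negative word, together with the shuffle factor $\qbinom{n}{k}$. More precisely, choose for each letter $i$ how many copies $a_i$ are unbarred and $b_i=\beta_i-a_i$ are barred, and set $k=\sum_i b_i$. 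The sum over rearrangements of a fixed multiset then gives the following.
\begin{itemize}
\item The sum of $q^{\maj}$ over positive words is $\frac{(q;q)_{n-k}}{\prod_i (q;q)_{a_i}}$.
\item The sum over negative words is $q^{\binom{k}{2}}\frac{(q;q)_k}{\prod_i (q;q)_{b_i}}$. Here equal barred letters form descents, so after standardization each block of equal barred letters is reversed and the word is counted with complementary major index. I expect this is the only delicate point: I would check that the standardization rule, under which a barred letter read earlier counts as larger, turns ``$q$-multinomial with $\maj$'' into ``$q$-multinomial with $\binom{k}{2}-\maj$'', which by symmetry of the $q$-multinomial just yields the factor $q^{\binom k2}$.
\end{itemize}

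Combining these with the factor $\qbinom{n}{k}$, I get
\[
[x^\beta]J_{(n)}=(q;q)_n\prod_i\sum_{a_i+b_i=\beta_i}\frac{(-t)^{b_i}q^{\binom{b_i}{2}}}{(q;q)_{a_i}(q;q)_{b_i}}
\]
after distributing $q^{\binom k2}$. This distribution requires the cross terms $\sum_{i<j}b_ib_j$; I must recheck the exponent bookkeeping. I expect that the correct exponent from the shuffle of a positive and a negative word is exactly the per-letter one, possibly after using the fact that the barred letters interleave with unbarred letters of the same value in a fixed way.

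Next, I would sum the inner factor using the $q$-binomial theorem:
\[
\sum_{a+b=m}\frac{(-t)^bq^{\binom b2}}{(q;q)_a(q;q)_b}=\frac{(t;q)_m}{(q;q)_m}.
\]
This is the finite Cauchy/$q$-binomial identity $(t;q)_m=\sum_b\qbinom{m}{b}(-t)^bq^{\binom b2}$. Multiplying over the variables then gives the coefficient of $x^\beta u^n$ in $\prod_i A(x_iu)$, which is the claim of \Cref{lem:one-row-generating}.

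Alternatively, and as a cross-check, I would derive the lemma from Macdonald's known formula: $P_{(n)}=\frac{(q;q)_n}{(t;q)_n}g_n$, where $\sum_n g_n u^n=\prod_i\frac{(tx_iu;q)_\infty}{(x_iu;q)_\infty}$. The normalizing product for $(n)$ is $\prod_{c}(1-q^{\arm(c)}t)=(t;q)_n$, so $J_{(n)}=(q;q)_n g_n$. The $q$-binomial theorem then gives $\frac{(tv;q)_\infty}{(v;q)_\infty}=A(v)$, which finishes the proof in two lines. I would present this short route as the proof and regard the combinatorial computation as optional. The main obstacle is only confirming the normalization $J_{(n)}=(q;q)_ng_n$ from \eqref{eq:J=P}.
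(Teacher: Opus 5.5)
Your short route is exactly the paper's proof: you obtain $J_{(n)}=(t;q)_nP_{(n)}=(q;q)_ng_n$ from Macdonald's (4.9) and the normalization, then combine it with the generating function of $g_n$ and the $q$-binomial theorem $A(v)=(tv;q)_\infty/(v;q)_\infty$. In your optional combinatorial route, the negative-word factor is wrong as stated. For $b=(1,1)$, the words $\overline{1}\,\overline{2}$ and $\overline{2}\,\overline{1}$ give $1+q$, not $q(1+q)$. The $q$-multinomial is palindromic of degree $\sum_{i<j}b_ib_j=\binom k2-\sum_i\binom{b_i}{2}$; it is not invariant under $q\mapsto q^{-1}$. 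So the correct factor is $q^{\sum_i\binom{b_i}{2}}(q;q)_k/\prod_i(q;q)_{b_i}$, which is precisely the per-letter exponent. Hence no cross terms arise, and your displayed formula for $[x^\beta]J_{(n)}$ does hold.
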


\begin{proof}
  Let \( g_n(x;q,t) \) be the symmetric functions given by
  \cite[Ch.~VI, (2.8), p.~311]{Macdonald}
  \[
    \sum_{n\ge0}g_n(x;q,t)u^n
    =\prod_{i\ge1}\frac{(tx_i u;q)_\infty}{(x_i u;q)_\infty}.
  \]
  By \cite[Ch.~VI, (4.9), p.~323]{Macdonald} and \cite[Ch.~VI, (8.1)
  and (8.3), p.~352]{Macdonald}, we have
  \[
    J_{(n)}(x;q,t)=(t;q)_nP_{(n)}(x;q,t)
    =(q;q)_ng_n(x;q,t).
  \]
  The \( q \)-binomial identity \cite[Ch.~VI, \S2, Example~1,
  p.~314]{Macdonald} gives
  \[
    A(v)=\sum_{m\ge0}\frac{(t;q)_m}{(q;q)_m}v^m
    =\frac{(tv;q)_\infty}{(v;q)_\infty}.
  \]
  Combining these identities, we obtain
  \[
    \sum_{n\ge0}\frac{J_{(n)}(x;q,t)}{(q;q)_n}u^n
    =\sum_{n\ge0}g_n(x;q,t)u^n
    =\prod_{i\ge1}A(x_i u),
  \]
  as desired.
\end{proof}

\begin{proof}[Second proof of \eqref{eq:J=Y}]
  We first derive a recurrence for \( Y_{(n)} \) by considering the
  last part of each composition of \( n\ge1 \). For \( 1\le k\le n \),
  write \( \alpha=(\beta,k) \) with \( \beta\vDash n-k \). By
  \eqref{eq:wt(i,a)} and \eqref{eq:wt-one-row}, we have
  \begin{align*}
    \wt_{(n)}(\alpha)
    &=\wt_{(n-k)}(\beta)(1-q^{n-k}t^k)
      \prod_{j=n-k+1}^{n-1}(q^j-1)\\
    &=(-1)^{k-1}(1-q^{n-k}t^k)
      \frac{(q;q)_{n-1}}{(q;q)_{n-k}}\wt_{(n-k)}(\beta).
  \end{align*}
  Summing over \( \beta \) and \( k \), using
  \( e_\alpha=e_\beta e_k \), and dividing by \( (q;q)_{n-1} \),
  we obtain
  \begin{equation}\label{eq:one-row-rec}
    \frac{Y_{(n)}}{(q;q)_{n-1}}
    =\sum_{k=1}^n(-1)^{k-1}(1-q^{n-k}t^k)e_k
      \frac{Y_{(n-k)}}{(q;q)_{n-k}}.
  \end{equation}

  On the other hand, for the function \( A(v) \) in
  \Cref{lem:one-row-generating}, we have
  \[
    (1-v)A(v)=(1-tv)A(qv).
  \]
  Multiplying these identities over \( i\ge1 \) with \( v=x_i u \)
  and applying \Cref{lem:one-row-generating} gives
  \[
    \left(\prod_{i\ge1}(1-x_i u)\right)
    \sum_{n\ge0}\frac{J_{(n)}}{(q;q)_n}u^n
    =\left(\prod_{i\ge1}(1-tx_i u)\right)
    \sum_{n\ge0}\frac{J_{(n)}}{(q;q)_n}(qu)^n.
  \]
  Using \( \prod_{i\ge1}(1-x_i u)=\sum_{k\ge0}(-1)^ke_ku^k \)
  and comparing coefficients of \( u^n \), we obtain, for \( n\ge1 \),
  \[
    \frac{J_{(n)}}{(q;q)_{n-1}}
    =\sum_{k=1}^n(-1)^{k-1}(1-q^{n-k}t^k)e_k
      \frac{J_{(n-k)}}{(q;q)_{n-k}}.
  \]
  This is the same recurrence as \eqref{eq:one-row-rec}. Since
  \( J_{(0)}=Y_{(0)}=1 \), we obtain \( J_{(n)}=Y_{(n)} \) for all
  \( n\ge0 \) by induction.
\end{proof}

\section{Hook case}\label{sec:hook}

In this section, we give an \( e \)-expansion of \( J_{\mu}(x;q,t) \)
when \( \mu \) is a hook using the one-row case
in \Cref{sec:one-row-direct}. As a consequence, we prove
\Cref{conj:epositivity} in this case.

\subsection{Main formula}

We first extend the definition
of the weight \( \wt_{(n)}(\alpha) \) given in the previous section. Recall
that \( \chi(P)=1 \) if \( P \) holds and \( \chi(P)=0 \) otherwise.

\begin{defn}
  For a hook \( \mu=(r,1^s) \) with \( r\ge1 \), \( s\ge0 \), and
  \( r+s=n \), and a composition \( \alpha=(\alpha_1,\dots,\alpha_k) \)
  of \( n \), define
\begin{equation}
\label{eq:wt-hook}
    \wt_{(r,1^s)}(\alpha) = \chi(\alpha_k>s) (t;t)_s \prod_{i=1}^{r} \wt(i,\alpha).
\end{equation}
\end{defn}

The \( e \)-coefficients of \( J_{(r,1^s)}(x;q,t) \) have a combinatorial description
in terms of these weights, as follows. 

\begin{thm}\label{thm:main}
  Let \( n,r\ge1 \) and \( s\ge0 \) be integers with \( r+s=n \). Then we have
  \begin{equation} \label{eq:main-hook}
    J_{(r,1^s)}(x;q,t) =\sum_{\lambda\vdash n} \left(  \sum_{\alpha\sim \lambda} \wt_{(r,1^s)}(\alpha) \right)e_\lambda,
  \end{equation}
  where the second sum is over all compositions \( \alpha \)
  with \( \alpha\sim\lambda \).
\end{thm}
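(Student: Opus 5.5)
The plan is to reduce the hook case to the one-row case (\Cref{thm:one-row-direct}, i.e.\ \eqref{eq:J=Y}) by grouping the compositions in \eqref{eq:main-hook} according to their last part. Let \( \alpha=(\beta,m) \) with \( \beta\vDash n-m \). The factor \( \chi(\alpha_k>s) \) forces \( m>s \), so the last block starts at position \( n-m+1\le r \). Hence the product \( \prod_{i=1}^r\wt(i,\alpha) \) splits as \( \prod_{i=1}^{n-m}\wt(i,\beta) \) times \( (1-q^{n-m}t^m) \) times \( \prod_{j=n-m+1}^{r-1}(q^j-1) \). The first factor is exactly \( \wt_{(n-m)}(\beta) \), and the last product equals \( (-1)^{r-1-n+m}(q;q)_{r-1}/(q;q)_{n-m} \). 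Summing over \( \beta \) and applying \eqref{eq:J=Y} to \( n-m \), the right-hand side of \eqref{eq:main-hook} becomes
\[
  (t;t)_s\,(q;q)_{r-1}\sum_{m=s+1}^{n}(-1)^{r-1-n+m}\,\frac{1-q^{n-m}t^m}{(q;q)_{n-m}}\;e_m\,J_{(n-m)}(x;q,t).
\]
So the theorem becomes equivalent to a finite identity expressing \( J_{(r,1^s)} \) as a linear combination of the products \( e_mJ_{(n-m)} \) with \( m>s \).

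To prove this identity I would use the Pieri rule for multiplication by \( e_m \) (Macdonald, Ch.~VI, (6.24) together with the \( J \)-normalization \eqref{eq:J=P}). Adding a vertical strip of size \( m \) to the one-row shape \( (a) \) gives only two shapes, \( (a+1,1^{m-1}) \) and \( (a,1^m) \). Hence
\[
  e_mJ_{(a)}=A_{a,m}\,J_{(a+1,1^{m-1})}+B_{a,m}\,J_{(a,1^m)}
\]
for explicit coefficients \( A_{a,m},B_{a,m}\in\mathbb{Q}(q,t) \), where the term with \( a=0 \) reads \( e_m=J_{(1^m)}/(t;t)_m \). I would compute these coefficients directly from the arm/leg data of the two hooks. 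As a cross-check, I would compare the coefficient of \( m_{(1^n)} \) or specialize \( t=q \), where everything reduces to Schur functions.

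Substituting the Pieri expansion into the displayed sum rewrites it as a combination of hook polynomials \( J_{(a,1^{n-a})} \) with \( 1\le a\le r \). Each such hook receives contributions from \( m=n-a \) (through \( B \)) and \( m=n-a+1 \) (through \( A \)). The claim is that for every \( a<r \) these two contributions cancel, and that only the term \( m=s+1 \) survives at \( a=r \), with total coefficient \( 1 \). The boundary term \( m=n \) gives \( e_n \), which is handled by the \( a=0 \) convention. This reduces the theorem to checking, for each \( a \), a two-term rational identity in \( q \) and \( t \).

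I expect the main obstacle to be obtaining the integral-form Pieri coefficients \( A_{a,m},B_{a,m} \) in a clean enough product form that the telescoping cancellation is visible. If that turns out to be messy, my fallback is the route of the second proof of \eqref{eq:J=Y}. There I would derive a recurrence in \( s \) for \( J_{(r,1^s)} \), either from the Pieri rule with \( e_1 \) or from the HHL formula \eqref{eq:HHL} on \( \mu'=(s+1,1^{r-1}) \). I would then check that the composition sum satisfies the same recurrence, by peeling off the last part.
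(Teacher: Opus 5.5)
Your proposal is correct and is essentially the paper's argument. The Pieri coefficients you flag as the main obstacle are exactly those of the recurrence \eqref{eq:rec_J} that the paper quotes from Macdonald (using \( J_{(1^m)}=(t;t)_m e_m \)), namely \( A_{a,m}=\frac{1-t^m}{(1-q^a t^m)(t;t)_m} \) and \( B_{a,m}=\frac{1-q^a}{(1-q^a t^m)(t;t)_m} \), and these turn your \( m \)th summand (with \( a=n-m \)) into \( (-1)^{r-1-a}(H_{a+1}+H_a) \), where \( H_b=J_{(b,1^{n-b})}/\bigl((q;q)_{b-1}(t;t)_{n-b}\bigr) \) and \( H_0=0 \), so the sum telescopes to \( H_r \) and the prefactor \( (t;t)_s(q;q)_{r-1} \) gives exactly \( J_{(r,1^s)} \). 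The only difference is packaging: the paper proves that \( Y_{(r,1^s)} \) satisfies \eqref{eq:rec_J} (\Cref{lem:5}) and inducts on \( s \), which is essentially your fallback with the recurrence coming from the same Pieri rule for \( e_s J_{(r)} \), whereas your main route unrolls that induction into one explicit telescoping sum.
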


This theorem will be proved in the next subsection. In what follows,
we provide an equivalent description of \eqref{eq:main-hook} using ribbon tilings.

A \emph{ribbon} is a connected skew shape containing no
\( 2\times2 \) block. The \emph{content} of a cell \( (i,j) \)
is \( j-i \), and the \emph{head} of a ribbon is its cell of
smallest content.

A \emph{ribbon tiling} of a partition \( \nu \) is a tiling of
its diagram by ribbons. Its \emph{type} is the partition formed
by the ribbon sizes. Write \( \RT(\nu) \) for the set of ribbon
tilings of \( \nu \), and \( \RT(\nu,\lambda) \) for those of
type \( \lambda \). A ribbon tiling is \emph{special} if each
ribbon contains a cell in the first column of \( \nu \).
Denote the corresponding sets by \( \SRT(\nu) \) and
\( \SRT(\nu,\lambda) \).

For \( T\in\RT(\nu) \), define
\[
  \wt(T)=\prod_{c\in\nu}\wt(c,T,\nu),
  \qquad
  \wt(c,T,\nu)=
  \begin{cases}
    1-q^{\leg(c)}t^{|\theta|},
    &\text{if \( c \) is the head of a ribbon \( \theta\in T \)},\\
    q^{\leg(c)}-t^{\arm'(c)},&\text{otherwise}.
  \end{cases}
\]
Here the leg and coarm lengths are measured in \( \nu \).

\begin{thm}\label{thm:2}
  Let \( \mu=(r,1^s) \), where \( r\ge1 \), \( s\ge0 \), and
  \( r+s=n \). Then
  \begin{equation}\label{eq:main-ribbon}
    J_\mu(x;q,t)
    =\sum_{\lambda\vdash n}
      \left(\sum_{T\in\SRT(\mu',\lambda)}\wt(T)\right)e_\lambda.
  \end{equation}
\end{thm}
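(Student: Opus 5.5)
\Cref{thm:2} follows from \Cref{thm:main} once we construct a type-preserving and weight-preserving bijection between the compositions \( \alpha\vDash n \) with \( \alpha_k>s \) and the special ribbon tilings \( \SRT(\mu') \). Here \( \mu'=(s+1,1^{r-1}) \). Compositions with \( \alpha_k\le s \) have \( \wt_{(r,1^s)}(\alpha)=0 \), so they contribute nothing to \eqref{eq:main-hook}, and we may ignore them. Once the bijection \( T\mapsto\alpha(T) \) satisfies \( \alpha(T)\sim\lambda \) whenever \( T\in\SRT(\mu',\lambda) \), and \( \wt(T)=\wt_{(r,1^s)}(\alpha(T)) \), \eqref{eq:main-ribbon} is exactly \eqref{eq:main-hook}.

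First I describe \( \SRT(\mu') \). The diagram of \( \mu' \) consists of a first column of \( r \) cells \( (1,1),\dots,(r,1) \) and a first row of \( s+1 \) cells. A cell \( (1,j) \) with \( j\ge2 \) is connected to the first column only through \( (1,1) \). Since every ribbon of a special tiling meets the first column and is connected, all of \( (1,2),\dots,(1,s+1) \) lie in the ribbon \( \theta_0 \) containing \( (1,1) \). So \( \theta_0 \) is the whole first row together with cells \( (2,1),\dots,(h,1) \) for some \( 1\le h\le r \), and \( |\theta_0|=h+s>s \). Every other ribbon lies in the first column, so it is a vertical segment. Hence a special tiling is just a decomposition of column one into consecutive segments, with the bottom segment extended by the first row.

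The bijection reads the segments from top to bottom. Let \( \alpha(T)=(\alpha_1,\dots,\alpha_k) \) list the segment sizes from the top down, with the last part replaced by \( |\theta_0|=h+s \). Then \( \alpha(T)\vDash n \), \( \alpha_k>s \), \( \alpha(T) \) is a rearrangement of the type of \( T \), and every composition with \( \alpha_k>s \) arises exactly once.

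It remains to compare weights. I label the column cell \( (i,1) \) by the index \( p=r+1-i \), so that top-to-bottom reading matches positions \( p=1,\dots,r \) of \( \alpha \). For a first-column cell we have \( \arm'=0 \) and \( \leg=r-i=p-1 \). The head of a ribbon is its cell of smallest content. For a vertical segment this is the top cell, which is the first position of its block. For \( \theta_0 \) it is \( (h,1) \), which is the first position of the last block. Hence:
\begin{itemize}
\item a head cell gets weight \( 1-q^{p-1}t^{\alpha_j} \), where \( \alpha_j \) is the size of its ribbon;
\item a non-head column cell gets weight \( q^{p-1}-t^0=q^{p-1}-1 \).
\end{itemize}
These match \( \wt(p,\alpha) \) for \( 1\le p\le r \). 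The row cells \( (1,j) \) with \( 2\le j\le s+1 \) are non-heads with \( \leg=0 \) and \( \arm'=j-1 \), so together they contribute \( \prod_{j=1}^{s}(1-t^{j})=(t;t)_s \). This gives \( \wt(T)=\wt_{(r,1^s)}(\alpha(T)) \).

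The only delicate point is the orientation: getting the head convention right and reading the segments from the top, so that leg lengths become \( p-1 \). Once that is fixed the verification is routine. The edge cases \( s=0 \) and \( h=r \) need no separate treatment.
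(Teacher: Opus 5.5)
Your proposal is correct and follows the paper's own approach: the paper deduces \Cref{thm:2} from \Cref{thm:main} in a single sentence, by viewing each composition with \( \alpha_k>s \) as a special ribbon tiling of \( \mu' \) with the same weight. You have written out that identification explicitly, and your orientation (segments read top to bottom, heads at the top cells, the bottom ribbon being the hook \( (s+1,1^{h-1}) \) with head \( (h,1) \), and the row cells contributing \( (t;t)_s \)) is the right one.
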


This follows directly from \Cref{thm:main} by interpreting each
composition with nonzero weight as a special ribbon tiling of \( \mu' \)
with the same weight.

\subsection{Proof of \Cref{thm:main}}

Let
\begin{equation}\label{eq:Y_mu}
  Y_{(r,1^s)} = Y_{(r,1^s)}(x;q,t)
  =\sum_{\alpha\vDash n}\wt_{(r,1^s)}(\alpha)e_\alpha.
\end{equation}
Then \Cref{thm:main} is equivalent to 
\begin{equation}\label{eq:10}
  J_{(r,1^s)}(x;q,t) = Y_{(r,1^s)}(x;q,t).
\end{equation}
It is known \cite[p.~363]{Macdonald} that for \( r,s\ge 1 \),
\begin{equation}\label{eq:rec_J}
    (1-q^r t^s)J_{(r)} J_{(1^s)}
    =(1-t^s) J_{\left(r+1,1^{s-1}\right)}+(1-q^r) J_{\left(r, 1^s\right)}.
\end{equation}
We will show that \( Y_{(r,1^s)}(x;q,t) \) satisfies the same
recurrence relation and initial conditions. Note that if
\( (r,1^s) = (1,1^{n-1})=(1^n) \), then by definition we have
\( \wt_{(1,1^{n-1})}(\alpha)=0 \) unless \( \alpha=(n) \). Thus,
\begin{equation}\label{eq:12}
  Y_{(1^n)} = (t;t)_n e_n.
\end{equation}

\begin{lem}\label{lem:5}
  For integers \( r,s\ge 1 \), we have
\begin{equation}\label{eqn:Yrec}
    (1-q^r t^s)Y_{(r)} Y_{(1^s)}
    =(1-t^s) Y_{\left(r+1,1^{s-1}\right)}+(1-q^r) Y_{\left(r, 1^s\right)}.
\end{equation}
\end{lem}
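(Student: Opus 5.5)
The plan is to decompose every composition in the definition \eqref{eq:Y_mu} of \( Y_{(r,1^s)} \) by its last part, as in the second proof of \eqref{eq:J=Y}, and then compare the two sides of \eqref{eqn:Yrec} term by term. I expect the \( p<r \) contributions from the two terms on the right to cancel, leaving exactly the left-hand side.

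\textbf{Step 1: factor the hook weight.} Fix \( \alpha=(\beta,m)\vDash n \) with last part \( m>s \), and set \( p=n-m \). Then \( 0\le p\le r-1 \) and \( \beta\vDash p \).
\begin{itemize}
\item The blocks of \( \beta \) occupy positions \( 1,\dots,p \). Their factors \( \wt(i,\alpha) \) for \( i\le p \) are exactly \( \wt(i,\beta) \), with product \( \wt_{(p)}(\beta) \).
\item The last block starts at position \( p+1\le r \). Its head contributes \( 1-q^{p}t^{m} \).
\item The positions \( p+2,\dots,r \) lie inside the last block and contribute \( \prod_{j=p+1}^{r-1}(q^j-1) \).
\end{itemize}
Hence
\[
\wt_{(r,1^s)}(\alpha)=(t;t)_s\,(-1)^{r-1-p}\frac{(q;q)_{r-1}}{(q;q)_p}\,(1-q^pt^{n-p})\,\wt_{(p)}(\beta).
\]
Summing over \( \beta \) and \( m \) gives
\[
Y_{(r,1^s)}=(t;t)_s\sum_{p=0}^{r-1}(-1)^{r-1-p}\frac{(q;q)_{r-1}}{(q;q)_p}(1-q^pt^{n-p})\,Y_{(p)}\,e_{n-p},
\]
with \( Y_{(0)}=1 \). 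This formula is valid for any \( r\ge1 \) and \( s\ge0 \).

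\textbf{Step 2: apply it to both terms on the right of \eqref{eqn:Yrec}.}
\begin{itemize}
\item For \( (r+1,1^{s-1}) \), which needs \( s\ge1 \), replace \( r \) by \( r+1 \) and \( s \) by \( s-1 \). The sum then runs over \( p=0,\dots,r \). The prefactor becomes \( (1-t^s)(t;t)_{s-1}=(t;t)_s \), and the coefficient of the \( p \)-th term is \( (-1)^{r-p}(q;q)_r/(q;q)_p \).
\item For \( (r,1^s) \), multiplying by \( (1-q^r) \) and using \( (1-q^r)(q;q)_{r-1}=(q;q)_r \) gives coefficients \( (-1)^{r-1-p}(q;q)_r/(q;q)_p \) for \( p=0,\dots,r-1 \).
\end{itemize}
The total \( n=r+s \) is the same in both terms, so the factors \( (1-q^pt^{n-p})\,Y_{(p)}\,e_{n-p} \) agree. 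The terms with \( p<r \) therefore cancel pairwise because of their opposite signs.

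\textbf{Step 3: identify what survives.} Only the \( p=r \) term of the first sum remains, namely
\[
(t;t)_s(1-q^rt^s)\,Y_{(r)}\,e_s .
\]
By \eqref{eq:12}, \( (t;t)_s e_s=Y_{(1^s)} \), so this equals \( (1-q^rt^s)Y_{(r)}Y_{(1^s)} \), which is the left-hand side of \eqref{eqn:Yrec}.

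The only delicate point is Step 1. There I must check the following:
\begin{itemize}
\item The constraint \( \alpha_k>s \) is exactly \( p\le r-1 \), so the last block genuinely starts within the first \( r \) positions.
\item The boundary cases \( p=0 \) and a last block of length \( 1 \) inside \( [1,r] \) are handled correctly by the empty-product conventions.
\item The index shift \( (r,s)\to(r+1,s-1) \) keeps \( n \) fixed.
\end{itemize}
Once the factorization is verified, the rest is the bookkeeping above.
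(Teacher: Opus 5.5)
Your proof is correct and is essentially the paper's argument. The paper also splits by the last part \( \alpha_k \): compositions with \( \alpha_k>s \) cancel between the two right-hand terms because \( \wt(r+1,\alpha)=q^r-1 \), and \( \alpha_k=s \) (your \( p=r \)) yields \( (1-q^rt^s)Y_{(r)}Y_{(1^s)} \). The only difference is that the paper cancels pointwise for each \( \alpha \), whereas you first sum over the prefix \( \beta \) to get a closed expression in the \( Y_{(p)} \), which repackages the same computation.
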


\begin{proof}
  Let \(n=r+s\). By \eqref{eq:Y_mu}, the right-hand side of
  \eqref{eqn:Yrec} is equal to
  \begin{equation}\label{eq:8}
    \sum_{\alpha\vDash n}
    \left(
      (1-t^s)\wt_{(r+1,1^{s-1})}(\alpha)
      +(1-q^r)\wt_{(r,1^s)}(\alpha)
    \right)e_\alpha.
  \end{equation}
  Fix \(\alpha=(\alpha_1,\dots,\alpha_k)\vDash n\). If
  \(\alpha_k\le s-1\), then both weights in \eqref{eq:8} vanish. If
  \(\alpha_k\ge s+1\), then \(r>\alpha_1+\cdots+\alpha_{k-1}\), so
  \(\wt(r+1,\alpha)=q^r-1\). Hence, the summand in \eqref{eq:8} also
  vanishes:
  \[
    (1-t^s)\wt_{(r+1,1^{s-1})}(\alpha)
    +(1-q^r)\wt_{(r,1^s)}(\alpha)
    =(t;t)_s\bigl((q^r-1)+(1-q^r)\bigr)
      \prod_{i=1}^r\wt(i,\alpha)=0.
  \]
  Therefore, we can restrict the sum in \eqref{eq:8} to the
  compositions with \( \alpha_k=s \).

  Suppose \(\alpha=(\beta,s)\) with \(\beta\vDash r\). Then
  \(\wt_{(r,1^s)}(\alpha)=0\) and
  \[
    (1-t^s)\wt_{(r+1,1^{s-1})}(\alpha)
    =(1-q^rt^s)(t;t)_s\wt_{(r)}(\beta).
  \]
  This, together with \(e_\alpha=e_\beta e_s\) and \eqref{eq:12},
  implies that \eqref{eq:8} is equal to
  \[
    (1-q^rt^s)(t;t)_s e_s
    \sum_{\beta\vDash r}\wt_{(r)}(\beta)e_\beta
    =(1-q^rt^s)Y_{(1^s)}Y_{(r)}.
  \]
  This is the left-hand side of \eqref{eqn:Yrec}, completing the
  proof.
\end{proof}

Now we are ready to prove \Cref{thm:main}.

\begin{proof}[Proof of \Cref{thm:main}]
  By \Cref{thm:one-row}, we have
  \[
    J_{(k)}(x;q,t) = Y_{(k)}(x;q,t) \qquad \text{for all } k\ge1.
  \]
  By \cite[p.~339]{Macdonald} and \eqref{eq:12}, we have
  \[
    J_{(1^k)}(x;q,t) = (t;t)_k e_k = Y_{(1^k)}(x;q,t) \qquad \text{for all } k\ge1.
  \]
  Thus, using \eqref{eq:rec_J}, \eqref{eqn:Yrec}, and the above
  results, we obtain \eqref{eq:10} by induction on \( s \).
\end{proof}

\subsection{Proof of \Cref{conj:epositivity} for hooks}

In this subsection we prove the \( e \)-positivity in \Cref{conj:epositivity}
when \( \mu \) is a hook.

Let \( \mu=(r,1^s) \) with \( n=r+s \), and fix an integer
\( k\ge1 \). For a composition
\( \alpha=(\alpha_1,\dots,\alpha_\ell)\vDash n \) and
\( 1\le i\le r \), define
\[
  \wt_k(i,\alpha)=
  \begin{cases}
    [k\alpha_j]_q-[i-1]_q
    &\text{if }i=1+\alpha_1+\cdots+\alpha_{j-1}
      \text{ for some }1\le j\le\ell,\\
    [i-1]_q &\text{otherwise.}
  \end{cases}
\]
Then \Cref{thm:main} with \( t=q^{-k} \) gives
\begin{equation}\label{eq:hook-large-k-positive}
    q^{k\left(n+\binom{s+1}{2}\right)}
    \frac{J_{(r,1^s)}(x;q,q^{-k})}{(q-1)^n}
    =\left(\prod_{j=1}^s[kj]_q\right)
      \sum_{\substack{\alpha\vDash n\\
    \alpha_{\ell(\alpha)}>s}}
    \left(\prod_{i=1}^r\wt_k(i,\alpha)\right)e_\alpha.
\end{equation}
If \( k\ge r-1 \), then
\[
  [k\alpha_j]_q-[i-1]_q =  q^{i-1}[k\alpha_j-i+1]_q \in \ZZ_{\ge0}[q].
\] 
Thus, \Cref{conj:epositivity} holds in this case. For an arbitrary
\( k\ge1 \), we first establish the following one-row formula,
which is different from \Cref{thm:one-row-direct} for \( t=q^{-k} \).
Throughout this section, the notation
\( \sum_{c_0+\cdots+c_{k-1}=n} \) means the sum over all nonnegative
integers \( c_0,\dots,c_{k-1} \) satisfying
\( c_0+\cdots+c_{k-1}=n \).

\begin{prop}\label{prop:one-row-positivity}
  Let \( n,k\ge1 \) be integers. Then
  \begin{equation}\label{eq:one-row-positive-specialization}
    q^{kn}\frac{J_{(n)}(x;q,q^{-k})}{(q-1)^n}
    =[n]_q! \sum_{c_0+\cdots+c_{k-1}=n}
        q^{\sum_{i=0}^{k-1}ic_i}\prod_{i=0}^{k-1}e_{c_i}.
  \end{equation}
  In particular, the left-hand side has \( e \)-coefficients in
  \( \ZZ_{\ge0}[q] \).
\end{prop}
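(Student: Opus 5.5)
Use the generating function of \Cref{lem:one-row-generating} with \( t=q^{-k} \). For this specialization,
\[
  A(v)=\frac{(q^{-k}v;q)_\infty}{(v;q)_\infty}=\prod_{j=1}^{k}\bigl(1-q^{-j}v\bigr),
\]
since the infinite products telescope: \( (q^{-k}v;q)_\infty=(q^{-k}v;q)_k\,(v;q)_\infty \). Hence
\[
  \sum_{n\ge0}\frac{J_{(n)}(x;q,q^{-k})}{(q;q)_n}u^n=\prod_{i\ge1}\prod_{j=1}^k(1-q^{-j}x_iu)=\prod_{j=1}^{k}\sum_{c\ge0}(-1)^c q^{-jc}e_c u^c .
\]
The only input is the standard identity \( \prod_i(1-x_iz)=\sum_c(-1)^ce_cz^c \), applied with \( z=q^{-j}u \).

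Next extract the coefficient of \( u^n \). This gives
\[
  J_{(n)}(x;q,q^{-k})=(q;q)_n(-1)^n\sum_{c_1+\cdots+c_k=n}q^{-\sum_j jc_j}\prod_j e_{c_j}.
\]
Reindex with \( j=k-i \) for \( i=0,\dots,k-1 \), writing \( c_i \) for the former \( c_{k-i} \). Then \( -\sum_j jc_j=-kn+\sum_i ic_i \).

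Now multiply by \( q^{kn}/(q-1)^n \) and use
\[
  (q;q)_n(-1)^n/(q-1)^n=\prod_{m=1}^n\frac{q^m-1}{q-1}=[n]_q!.
\]
This yields exactly \eqref{eq:one-row-positive-specialization}.

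Positivity is then immediate. Each term is \( [n]_q! \) times a power \( q^{\sum ic_i} \) with nonnegative exponent times \( e_\lambda \), where \( \lambda \) is the sorted sequence \( (c_i) \) with zeros dropped (using \( e_0=1 \)). So every \( e \)-coefficient lies in \( \ZZ_{\ge0}[q] \).

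The only delicate point is the bookkeeping in the specialization of \( A(v) \): checking that the ratio of \( q \)-Pochhammer products really truncates to a finite product when \( t=q^{-k} \), and tracking the signs and powers of \( q \) under the reindexing. Both are routine.
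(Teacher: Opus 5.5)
Your proof is correct, and it takes a genuinely different route from the paper's. The paper does not specialize the generating function here. It specializes the composition formula of \Cref{thm:one-row-direct} (in the form \eqref{eq:hook-large-k-positive} with $s=0$) and separates the last part of each composition. This gives the recurrence $[n]_qH_n=\sum_{a=1}^n([ka]_q-[n-a]_q)e_aH_{n-a}$. It then checks that the explicit sum $F_n$ satisfies the same recurrence, using a shifting identity on $(d_0,\dots,d_k)$.

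You instead observe that at $t=q^{-k}$ the series $A(v)$ of \Cref{lem:one-row-generating} collapses to the polynomial $\prod_{j=1}^k(1-q^{-j}v)$. This gives $J_{(n)}(x;q,q^{-k})=(-1)^n(q;q)_n\,e_n[X(q^{-1}+\cdots+q^{-k})]$, and the identity is just the expansion of $e_n$ over a $k$-fold alphabet. The point you flag as delicate is best settled coefficientwise rather than by telescoping infinite products. The coefficients $(t;q)_m/(q;q)_m$ are polynomial in $t$, and $(q^{-k};q)_m=0$ for $m>k$. The finite $q$-binomial theorem then gives $\sum_{m=0}^k\frac{(q^{-k};q)_m}{(q;q)_m}v^m=(q^{-k}v;q)_k$ directly.

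Your argument is shorter and needs only Macdonald's classical one-row facts, not \Cref{thm:one-row-direct}. It also explains conceptually where the formula comes from. The paper's argument shows something different: the composition-weighted sum is positive overall, even though its individual weights $[k\alpha_j]_q-[i-1]_q$ can be negative. It also sets up the last-part recurrence that is reused in the proof of \Cref{cor:hook-positivity}. That corollary only invokes the statement of the proposition, so your proof would serve there equally well.
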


\begin{proof}
  Fix \( k\ge1 \), and let
  \begin{equation}\label{eq:15}
    H_n=\frac{q^{kn}J_{(n)}(x;q,q^{-k})}{(q-1)^n[n]_q!},
    \qquad 
    F_n= \sum_{c_0+\cdots+c_{k-1}=n}
      q^{\sum_{i=0}^{k-1}ic_i}\prod_{i=0}^{k-1}e_{c_i}.
  \end{equation}
  Then we must show that \( H_n=F_n \).

  We first derive a recurrence for \( H_n \). By
  \eqref{eq:hook-large-k-positive} with \( r=n \) and \( s=0 \),
  we have
  \begin{equation}\label{eq:13}
    [n]_q! H_n
    = \sum_{\alpha\vDash n}
    \left(\prod_{i=1}^n\wt_k(i,\alpha)\right)e_\alpha.
\end{equation}
If \( \alpha=(\beta,a)\vDash n \), then the last block of \( \alpha \)
contributes the weight
  \[
   e_a \prod_{i=n-a+1}^{n}  \wt_k(i,\alpha) 
    = e_a ([ka]_q-[n-a]_q)\frac{[n-1]_q!}{[n-a]_q!}.
  \]
Hence, dividing \eqref{eq:13} by \( [n-1]_q! \)  gives
\begin{equation}\label{eq:hook-positive-row-rec}
  [n]_qH_n=\sum_{a=1}^n([ka]_q-[n-a]_q)e_aH_{n-a}
    \qquad(n\ge1).
  \end{equation}

  To derive the recurrence for \( F_n \), we claim that
  \begin{equation}\label{eq:14}
    \sum_{a=0}^n q^{ka}e_aF_{n-a}
    =\sum_{d_0+\cdots+d_k=n}
       q^{\sum_{i=0}^k id_i}\prod_{i=0}^k e_{d_i}
    =\sum_{a=0}^n q^{n-a}e_aF_{n-a}.
  \end{equation}
  The first equality is obtained by setting \( d_k=a \) and
  \( d_i=c_i \) for \( 0\le i\le k-1 \). The second is obtained by
  setting \( d_0=a \) and \( d_i=c_{i-1} \) for \( 1\le i\le k \),
  since
  \[
    \sum_{i=1}^k id_i
    =\sum_{i=0}^{k-1}(i+1)c_i
    =n-a+\sum_{i=0}^{k-1}ic_i.
  \]
  Solving for \( F_n \) in \eqref{eq:14} yields
  \[
    (q^n-1)F_n
    =\sum_{a=1}^n(q^{ka}-q^{n-a})e_aF_{n-a}.
  \]
  Dividing by \( q-1 \) shows that \( F_n \) satisfies the same
  recurrence \eqref{eq:hook-positive-row-rec}. Since \( F_0=H_0=1 \),
  induction gives \( F_n=H_n \) for all \( n\ge0 \). 
\end{proof}

We now use the one-row formula to obtain a positive formula for
every hook.

\begin{cor}\label{cor:hook-positivity}
  Let \( r,k\ge1 \) and \( s\ge0 \) be integers with \( n=r+s \).
  Then
    \begin{multline}\label{eq:hook-positive-formula}
    \qquad\qquad  q^{k\left(n+\binom{s+1}{2}\right)}
      \frac{J_{(r,1^s)}(x;q,q^{-k})}{(q-1)^n}\\
      =[r-1]_q!\prod_{j=1}^s[kj]_q
      \sum_{\substack{d_0+\cdots+d_k=n\\ d_k\le s<d_0}}
        q^{\sum_{i=0}^k i d_i}[k(d_0-d_k)]_q
        \prod_{i=0}^k e_{d_i}.\qquad\qquad\quad
      \end{multline}
  In particular, the expression in \eqref{eq:Schurpositivity} has
  \( e \)-coefficients in \( \ZZ_{\ge0}[q] \) for every hook
  partition.
\end{cor}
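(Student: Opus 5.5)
The plan is to substitute \Cref{prop:one-row-positivity} into the composition sum of \eqref{eq:hook-large-k-positive}, and then to rewrite the result using the same \( (k+1) \)-fold sum that was used in the proof of \Cref{prop:one-row-positivity}.

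\emph{Step 1: reduce to one-row factors.} Start from \eqref{eq:hook-large-k-positive}. Each \( \alpha \) with nonzero weight has the form \( \alpha=(\beta,a) \) with \( a>s \) and \( \beta\vDash n-a \).
\begin{itemize}
\item Since \( a>s \), the last block starts at position \( n-a+1\le r \). So the first \( r \) cells are the cells of \( \beta \) together with the cells \( n-a+1,\dots,r \) of the last block.
\item The cells of \( \beta \) contribute \( \prod_{i\le n-a}\wt_k(i,\beta)=[n-a]_q!\,F_{n-a} \), by \eqref{eq:13} and \Cref{prop:one-row-positivity}. Here \( F_m \) is as in \eqref{eq:15}.
\item The cells \( n-a+1,\dots,r \) contribute \( ([ka]_q-[n-a]_q)\,[r-1]_q!/[n-a]_q! \).
\end{itemize}
The factor \( [n-a]_q! \) cancels, and the right-hand side of \eqref{eq:hook-large-k-positive} becomes
\[
  [r-1]_q!\prod_{j=1}^s[kj]_q\sum_{a=s+1}^{n}\frac{q^{ka}-q^{n-a}}{q-1}\,e_aF_{n-a}.
\]

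\emph{Step 2: rewrite the sum over \( a \) as a \( (k+1) \)-fold sum.} Let
\[
  w(d)=q^{\sum_{i=0}^k id_i}\prod_{i=0}^k e_{d_i}
\]
for \( d=(d_0,\dots,d_k) \) with \( d_0+\cdots+d_k=n \). The two bijections used to prove \eqref{eq:14} still apply when \( a \) is restricted.
\begin{itemize}
\item Setting \( d_k=a \) gives \( \sum_{a>s}q^{ka}e_aF_{n-a}=\sum_{d_k>s}w(d) \).
\item Setting \( d_0=a \) gives \( \sum_{a>s}q^{n-a}e_aF_{n-a}=\sum_{d_0>s}w(d) \).
\end{itemize}
The terms with both \( d_0>s \) and \( d_k>s \) cancel in the difference. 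What remains is
\[
  \sum_{d_k>s\ge d_0}w(d)-\sum_{d_0>s\ge d_k}w(d).
\]

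\emph{Step 3: the swap involution.} Exchange \( d_0 \) and \( d_k \). This maps \( \{d_k>s\ge d_0\} \) bijectively onto \( \{d_0>s\ge d_k\} \) and leaves \( \prod_i e_{d_i} \) unchanged. Only the exponent of \( q \) moves: the entry at index \( k \) goes from \( d_k \) to \( d_0 \). Written in the new coordinates, the weight of the preimage is \( q^{k(d_0-d_k)}w(d) \). The difference therefore equals
\[
  \sum_{d_k\le s<d_0}\bigl(q^{k(d_0-d_k)}-1\bigr)w(d).
\]
Dividing by \( q-1 \) produces the factor \( [k(d_0-d_k)]_q \), which is exactly \eqref{eq:hook-positive-formula}.

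Positivity then follows immediately, since \( d_0>d_k \) makes \( [k(d_0-d_k)]_q\in\ZZ_{\ge0}[q] \). The main obstacle I expect is the bookkeeping in Step 1. One has to check that the last block genuinely straddles position \( r \), which holds because \( a>s \). One also has to check that the \( \beta \)-part matches \( [n-a]_q!\,F_{n-a} \) exactly, including the edge case \( n-a=0 \). The exponent shift in Step 3 is a short direct check.
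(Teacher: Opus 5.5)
Your proof is correct and follows the paper's argument. You split off the last part \( a>s \), insert \Cref{prop:one-row-positivity} for the \( \beta \)-part (this means summing over \( \beta\vDash n-a \) together with \( e_\beta \), as you evidently intend), and re-index via \( d_k=a \) and \( d_0=a \). The only difference is cosmetic: you cancel the overlap \( \{d_0>s,\ d_k>s\} \) before swapping \( d_0\leftrightarrow d_k \), whereas the paper swaps first and then discards that region by antisymmetry.
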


\begin{proof}
  Fix \( k\ge1 \). Let
  \[
    B_{r,s}=q^{k\left(n+\binom{s+1}{2}\right)}
       \frac{J_{(r,1^s)}(x;q,q^{-k})}{(q-1)^n},
    \qquad C_s=\prod_{j=1}^s[kj]_q.
  \]
  By the same argument as in the proof of \Cref{prop:one-row-positivity},
  separating the last part \( a>s \) of the compositions
  \( \alpha=(\beta,a)\vDash n \) in \eqref{eq:hook-large-k-positive}
  gives
  \begin{align*}
    B_{r,s}
    &=C_s \sum_{a=s+1}^n  
    e_a ([ka]_q-[n-a]_q)\frac{[r-1]_q!}{[n-a]_q!}
      \sum_{\beta\vDash n-a}
    \left(\prod_{i=1}^{n-a}\wt_k(i,\beta)\right)e_\beta\\
    &=C_s[r-1]_q! \sum_{a=s+1}^n([ka]_q-[n-a]_q)e_a \frac{B_{n-a,0}}{[n-a]_q!}.
  \end{align*}
  Applying \Cref{prop:one-row-positivity} to \( B_{n-a,0} \) and
  using \( [ka]_q-[n-a]_q=(q^{ka}-q^{n-a})/(q-1) \), we obtain
  \begin{equation}\label{eq:16}
    \frac{B_{r,s}}{C_s[r-1]_q!}
    = \sum_{a=s+1}^n \frac{q^{ka}-q^{n-a}}{q-1} e_a
    \sum_{c_0+\cdots+c_{k-1}=n-a}
      q^{\sum_{i=0}^{k-1}i c_i} \prod_{i=0}^{k-1} e_{c_i}.
  \end{equation}

  Consider a tuple \( (c_0,\dots,c_{k-1}) \) of nonnegative integers
  with \( c_0 + \cdots + c_{k-1}=n-a \).
  If we let \( (d_0,\dots,d_k) = (c_0,\dots,c_{k-1},a) \),
  then \( d_0 + \cdots + d_k=n \), \( d_k=a>s \), and
  \[
    \sum_{i=0}^k i d_i  = ka+ \sum_{i=0}^{k-1} i c_i.
  \]
  If we let \( (d_0,\dots,d_k) = (a,c_0,\dots,c_{k-1}) \),
  then \( d_0 + \cdots + d_k=n \), \( d_0=a>s \), and
  \[
    \sum_{i=0}^k i d_i  = \sum_{i=0}^{k-1} (i+1) c_i
    = (n-a)+ \sum_{i=0}^{k-1} i c_i.
  \]
  Thus, we can rewrite \eqref{eq:16} as
  \begin{equation}\label{eq:17}
    \frac{B_{r,s}}{C_s[r-1]_q!}
    = \sum_{\substack{d_0+\cdots+d_{k}=n\\ d_k>s}}
      \frac{q^{\sum_{i=0}^{k}i d_i}}{q-1} \prod_{i=0}^{k} e_{d_i}
      -    \sum_{\substack{d_0+\cdots+d_{k}=n\\ d_0>s}}
      \frac{q^{\sum_{i=0}^{k}i d_i}}{q-1} \prod_{i=0}^{k} e_{d_i}.
  \end{equation}
  Interchanging \( d_0 \) and \( d_k \) gives
  \[
    \sum_{\substack{d_0+\cdots+d_{k}=n\\ d_k>s}}
      q^{\sum_{i=0}^{k}i d_i} \prod_{i=0}^{k} e_{d_i}
      =\sum_{\substack{d_0+\cdots+d_{k}=n\\ d_0>s}}
      q^{kd_0+\sum_{i=0}^{k-1}i d_i} \prod_{i=0}^{k} e_{d_i}.
  \]
  Thus \eqref{eq:17} is equivalent to
  \[
    \frac{B_{r,s}}{C_s[r-1]_q!}
    = \sum_{\substack{d_0+\cdots+d_{k}=n\\ d_0>s}}
     \frac{q^{kd_0}-q^{kd_k}}{q-1}
      q^{\sum_{i=0}^{k-1}i d_i} \prod_{i=0}^{k} e_{d_i}.
  \]
  Since the summand is antisymmetric in \( d_0 \) and \( d_k \), the
  sum over nonnegative integers \( d_i \) satisfying both \( d_0>s \)
  and \( d_k>s \) vanishes. Thus, we obtain
  \begin{align*}
    \frac{B_{r,s}}{C_s[r-1]_q!}
    &= \sum_{\substack{d_0+\cdots+d_{k}=n\\ d_k\le s< d_0}}
     \frac{q^{kd_0}-q^{kd_k}}{q-1}
      q^{\sum_{i=0}^{k-1}i d_i} \prod_{i=0}^{k} e_{d_i}\\
    &= \sum_{\substack{d_0+\cdots+d_{k}=n\\ d_k\le s< d_0}}
     q^{kd_k}[k(d_0-d_k)]_q 
      q^{\sum_{i=0}^{k-1}i d_i} \prod_{i=0}^{k} e_{d_i},
  \end{align*}
  which is the desired formula.
\end{proof}

\section{Two-column case}\label{sec:two-column}

In this section, we provide a formula for the \( e \)-coefficients of
\( J_\mu (x;q,t) \) and prove \Cref{conj:epositivity} when \( \mu \)
consists of two columns. In this case, each nonzero
\( e \)-coefficient of \( J_\mu (x;q,t) \) admits a factorization
into \( |\mu| \) factors.

\begin{thm}\label{thm:two-column}
  Let \( 0\le\ell\le\lfloor n/2\rfloor \) and
  \( \mu=(2^\ell, 1^{n-2\ell}) \). Then we have
  \begin{equation}\label{eq:18}
    J_\mu(x;q,t)
    = \sum_{p=0}^{\ell}
    \left( q^{\ell -p}(1-t^{n-2p}) (t;t)_{n-\ell-p-1} (q^{-1};t)_{\ell-p}
  \frac{(t;t)_{\ell}}{(t;t)_{\ell -p}}\frac{(qt;t)_{n-\ell}}{(qt;t)_{n-\ell -p}}
 \right) e_{(n-p,p)},
  \end{equation}
  where we use the convention that \( (1-t^{n-2p})(t;t)_{n-\ell-p-1}=(t;t)_{n-2p} \) when \( p=\ell \).

  Equivalently, we have
  \[
    J_\mu (x;q,t) =
    \sum_{p=0}^{\ell} \left( \prod_{c\in\mu}\wt_{\mu,p}(c) \right) e_{(n-p,p)},
  \]
  where the weight \( \wt_{\mu,p}(c) \)
  of a cell \( c=(i,j)\in\mu \) is defined by
  \[
    \wt_{\mu,p}(c)=
    \begin{cases}
      1-q^{\arm(c)}t^{\leg(c)+1}, & \mbox{if \( i\le p \)},\\
      1-t^{n-2p}, & \mbox{if \( c=(p+1,1) \)},\\
      q^{j-1}-t^{i-p-1}, & \mbox{if \( i>p \) and \( c\ne(p+1,1) \)},
    \end{cases}
  \]
  with arm and leg lengths measured in \( \mu \).
  See \Cref{fig:two-column-cell-factors}.
\end{thm}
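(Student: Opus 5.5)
The plan is to exploit the fact that the products \( e_{(n-p,p)}=e_{n-p}e_p \), \( 0\le p\le\lfloor n/2\rfloor \), span exactly the same space as the \( J_\mu \) with \( \mu \) of at most two columns, with a triangular change of basis between them. Recall that \( J_{(1^p)}=(t;t)_p e_p \) (used in the proof of \Cref{thm:main}) and \( P_{(1^p)}=e_p \). By Macdonald's Pieri rule for multiplication by \( e_r \) \cite[Ch.~VI, (6.24)]{Macdonald}, we have
\[
  e_{n-p}\,P_{(1^p)}=\sum_{\mu}\psi'_{\mu/(1^p)}(q,t)\,P_\mu ,
\]
where the sum runs over \( \mu\vdash n \) such that \( \mu/(1^p) \) is a vertical strip. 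Such \( \mu \) are exactly \( \mu=(2^\ell,1^{n-2\ell}) \) with \( \ell\le p \): the strip puts \( \ell \) cells in column \( 2 \) and extends column \( 1 \) to height \( n-\ell \). Using \eqref{eq:J=P}, I will rewrite this as
\[
  e_{(n-p,p)}=\sum_{\ell=0}^{p} a_{p,\ell}\,J_{(2^\ell,1^{n-2\ell})},
\]
with explicit product coefficients \( a_{p,\ell} \). These come from \( \psi' \) (a product over cells of the strip's row/column intersections) divided by the hook products \( \prod_{c}(1-q^{\arm(c)}t^{\leg(c)+1}) \) of the two-column shapes. Here \( a_{p,p}\neq0 \), so the system is triangular.

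The theorem asserts that the inverse matrix is given by the coefficients \( b_{\ell,p} \) in \eqref{eq:18}. So the second step is to verify
\[
  \sum_{p=\ell'}^{\ell} b_{\ell,p}\,a_{p,\ell'}=\delta_{\ell\ell'}
\]
for \( \ell'\le\ell \), rather than derive the inverse from scratch. Equivalently, I will substitute the right-hand side of \eqref{eq:18} and show that the coefficient of each \( J_{(2^{\ell'},1^{n-2\ell'})} \) vanishes for \( \ell'<\ell \) and equals \( 1 \) for \( \ell'=\ell \). The diagonal case \( \ell'=\ell \) is a direct product check using the convention for \( p=\ell \). In that case the factors \( (t;t)_\ell \) and \( (qt;t)_{n-\ell}/(qt;t)_{n-2\ell} \) should match the hook product of the first \( \ell \) rows, and \( (t;t)_{n-2\ell} \) should match the single column above them.

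For \( \ell'<\ell \), after pulling out factors independent of \( p \), the sum over \( p\in[\ell',\ell] \) should become a terminating basic hypergeometric series in base \( t \). Its ingredients are the ratios \( (t;t)_\ell/(t;t)_{\ell-p} \), \( (q^{-1};t)_{\ell-p} \) and \( (qt;t)_{n-\ell}/(qt;t)_{n-\ell-p} \), together with the \( t \)-dependence of \( \psi' \), which involves factors like \( (1-q^{a}t^{b}) \). The factor \( (1-t^{n-2p}) \) indicates a well-poised-type term, or it can be split as \( (1-t^{n-2p})=(1-t^{n-\ell-p})+t^{n-\ell-p}(1-t^{\ell-p}) \) to reduce to two balanced \( {}_3\phi_2 \) sums. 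I would then evaluate these with the \( q \)-Pfaff--Saalsch\"utz formula (or \( q \)-Chu--Vandermonde), expecting a factor like \( (t^{-(\ell-\ell')};t)_{\ell-\ell'} \)-type that vanishes when \( \ell'<\ell \).

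The main obstacle is this summation step: getting \( \psi'_{\mu/(1^p)} \) into a clean product form in terms of \( \ell,p,n \), and recognizing the resulting sum as a summable series. If the direct inversion is messy, my fallback is induction on \( \ell \). I would prove a three-term recurrence for two-column shapes analogous to \eqref{eq:rec_J}, namely \( J_{(1^{n-p})}J_{(1^p)} \) expressed via the top two \( J \)'s, using the Pieri rule. I would then check that the right-hand side of \eqref{eq:18} satisfies it, coefficientwise in \( e_{(n-p,p)} \). That reduces everything to a rational-function identity in \( q,t \) for each \( p \), which should be routine. Finally, the cellwise form \( \prod_{c}\wt_{\mu,p}(c) \) will be checked by grouping the factors of \eqref{eq:18} by rows: rows \( \le p \) give the hook factors, the cell \( (p+1,1) \) gives \( 1-t^{n-2p} \), and the remaining cells give \( (t;t)_{n-\ell-p-1} \) from column \( 1 \) and \( q^{\ell-p}(q^{-1};t)_{\ell-p} \) from column \( 2 \).
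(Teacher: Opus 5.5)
Your setup is the paper's: the Pieri rule for $e_{n-p}P_{(1^p)}$ with $P_{(1^p)}=e_p$, triangularity, and a check that the coefficients of \eqref{eq:18} invert the Pieri matrix. You compose the two matrices in the opposite order from the paper. You sum over the $e$-index $p$ and read off the coefficient of $J_{(2^{\ell'},1^{n-2\ell'})}$, whereas the paper sums over the $J$-index and reads off the coefficient of $e_{(n-p,p)}$. That is legitimate, and it even removes the need for the uniqueness argument.

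The gap is the summation, which is the only nontrivial step, and the method you propose for it fails. Put $p=\ell-j$, $m=\ell-\ell'\ge1$ and $a=t^{n-2\ell}$. Up to $j$-independent factors, your summand is
\[
  \frac{1-at^{2j}}{1-a}\,
  \frac{(a,\,q^{-1},\,qat^{m},\,t^{-m};t)_j}
       {(t,\,qat,\,at^{m+1},\,q^{-1}t^{1-m};t)_j}\,t^{j},
  \qquad 0\le j\le m .
\]
Your split $1-t^{n-2p}=(1-t^{n-\ell-p})+t^{n-\ell-p}(1-t^{\ell-p})$ reads $1-at^{2j}=(1-at^j)+at^j(1-t^j)$ in these variables. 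It produces two ${}_4\phi_3$ series, not ${}_3\phi_2$'s. For instance, the first piece has numerator parameters $at,q^{-1},qat^m,t^{-m}$, denominator parameters $qat,at^{m+1},q^{-1}t^{1-m}$, and argument $t$. For $m\ge2$ no parameters cancel, and the series is not balanced. So neither $q$-Pfaff--Saalsch\"utz nor $q$-Chu--Vandermonde applies.

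The missing idea is to keep $1-t^{n-2p}$ intact as the very-well-poised factor. The displayed sum is a terminating very-well-poised ${}_6\phi_5$ in base $t$ with
\[
  a=t^{n-2\ell},\qquad b=q^{-1},\qquad c=qat^m=qt^{n-\ell-\ell'},
\]
terminating parameter $t^{-m}$, and argument $at^{m+1}/(bc)=t$. Gasper--Rahman (II.21) then gives
\[
  \frac{(at,\,t^{1-m};t)_m}{(qat,\,q^{-1}t^{1-m};t)_m}=0 .
\]
The zero comes from the factor $1-t^0$ in $(t^{1-m};t)_m$; the factor $(t^{-m};t)_m$ you anticipated is nonzero. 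This is the same summation formula the paper uses in its order of composition. There the well-poised factor is the Pieri factor $1-qt^{n-2j}$ and $a=qt^{n-2\ell}$. That order has the small advantage that $a\ne1$ always. In yours, $n=2\ell$ gives $a=1$, and the $j=0$ term needs the convention in \eqref{eq:18} or a limit in $a$.

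Your fallback does not work either. By the very Pieri rule you quote,
\[
  J_{(1^{n-p})}J_{(1^p)}=(t;t)_{n-p}(t;t)_p\,e_{n-p}e_p
\]
expands into all $p+1$ functions $J_{(2^j,1^{n-2j})}$ with $0\le j\le p$, not just the top two. So there is no three-term recurrence analogous to \eqref{eq:rec_J}; that recurrence depends on a row times a column producing only two hook shapes.

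The rest of your plan is correct: the vertical-strip description, the diagonal check $b_{\ell,\ell}a_{\ell,\ell}=1$, and the row-by-row derivation of the cellwise form.
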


\begin{figure}
  \centering
  \begin{tikzpicture}[x=1.45cm,y=.65cm,
      cell/.style={draw=black,line width=.5pt},
      factor/.style={font=\small},
      note/.style={align=left,font=\small}]
    \foreach \r/\leftfactor/\rightfactor in {
      1/{1-qt^6}/{1-t^4},
      2/{1-qt^5}/{1-t^3}} {
      \draw[cell] (0,\r-1) rectangle (1,\r);
      \draw[cell] (1,\r-1) rectangle (2,\r);
      \node[factor] at (.5,\r-.5) {$\leftfactor$};
      \node[factor] at (1.5,\r-.5) {$\rightfactor$};
    }
    \draw[cell,fill=red!12] (0,2) rectangle (1,3);
    \node[factor] at (.5,2.5) {$1-t^6$};
    \foreach \r/\factor in {4/{1-t},5/{1-t^2},6/{1-t^3}} {
      \draw[cell] (0,\r-1) rectangle (1,\r);
      \node[factor] at (.5,\r-.5) {$\factor$};
    }
    \draw[cell] (1,2) rectangle (2,3);
    \node[factor] at (1.5,2.5) {$q-1$};
    \draw[cell] (1,3) rectangle (2,4);
    \node[factor] at (1.5,3.5) {$q-t$};
    \draw[decorate,decoration={brace,amplitude=4pt}]
      (2.3,6) -- (2.3,2.12);
    \node[note,anchor=west] at (2.55,4.1)
      {Remaining diagram $\nu=(2^2,1^2)$\\[3pt]
       $q^{\arm'(c)}-t^{\leg'(c)}$, except at the southwest cell\\[3pt]
       Southwest cell: $1-t^{|\nu|}=1-t^6$};
    \draw[decorate,decoration={brace,amplitude=4pt}]
      (2.3,1.88) -- (2.3,0);
    \node[note,anchor=west] at (2.55,.95)
      {Bottom $p=2$ rows\\[3pt]
       $1-q^{\arm(c)}t^{\leg(c)+1}$, measured in $\mu$};
  \end{tikzpicture}
  \caption{The weight \( \wt_{\mu,p}(c) \) of each cell \( c\in \mu \)
    for \( \mu=(2^4,1^2) \) and \( p=2 \).}
  \label{fig:two-column-cell-factors}
\end{figure}

To prove \Cref{thm:two-column}, we use the following lemma.

\begin{lem}\label{lem:Pieri}
  For \( a\ge b \) with \( n=a+b \), we have
  \begin{equation}\label{eqn:Pieri}
    e_a \cdot e_b=\sum_{j=0}^b \frac{(q;t)_{a-j}(q;t)_{b-j}(1-qt^{n-2j})}{(t;t)_{a-j}(t;t)_{b-j}(t;t)_j(q;t)_{n-j+1} }J_{(2^j,1^{n-2j})}(x;q,t).
  \end{equation}
\end{lem}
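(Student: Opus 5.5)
The plan is to derive \eqref{eqn:Pieri} from Macdonald's Pieri rule for multiplication by an elementary symmetric function, and then convert from the \( P \)-basis to the \( J \)-basis using \eqref{eq:J=P}.

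First, since \( a\ge b \), write \( e_a=P_{(1^a)}(x;q,t) \); Macdonald \cite[p.~339]{Macdonald} gives \( P_{(1^a)}=e_a \). By the Pieri formula \cite[Ch.~VI, (6.24)]{Macdonald},
\[
  P_{(1^a)}\,e_b=\sum_{\lambda}\psi'_{\lambda/(1^a)}(q,t)\,P_\lambda ,
\]
where \( \lambda \) ranges over partitions such that \( \lambda/(1^a) \) is a vertical \( b \)-strip. Such \( \lambda \) have at most two columns. Writing \( \lambda=(2^j,1^{n-2j}) \), the strip consists of \( j \) cells in the second column, rows \( 1,\dots,j \), and \( b-j \) cells in the first column. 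The conditions are \( 0\le j\le b \), and \( j\le a \) holds automatically. So exactly the shapes \( (2^j,1^{n-2j}) \), \( 0\le j\le b \), appear, matching the index set in \eqref{eqn:Pieri}.

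Second, I will compute \( \psi'_{\lambda/\mu} \) for \( \mu=(1^a) \) and \( \lambda=(2^j,1^{n-2j}) \) using Macdonald's product formula \cite[Ch.~VI, (6.13)]{Macdonald}. This is a product of ratios \( b_\mu(s)/b_\lambda(s) \) over cells \( s \) lying in columns that meet the strip but not in rows that meet it, where \( b_\lambda(s)=(1-q^{\arm(s)}t^{\leg(s)+1})/(1-q^{\arm(s)+1}t^{\leg(s)}) \). Only the first column matters. Its cells in rows \( j+1,\dots,a \) are exactly those not in a row of the strip, since rows \( 1,\dots,j \) contain second-column strip cells and rows \( >a \) contain first-column strip cells. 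For these cells, arm lengths are \( 0 \) in both shapes, and the legs are \( a-i \) in \( \mu \) and \( n-j-i \) in \( \lambda \). The resulting ratios telescope into quotients of \( (t;t) \) and \( (q;t) \) Pochhammer symbols.

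Third, I will multiply by the normalizations. By \eqref{eq:J=P}, \( J_\lambda=c_\lambda P_\lambda \) with \( c_\lambda=\prod_{c\in\lambda}(1-q^{\arm(c)}t^{\leg(c)+1}) \), and \( e_a=J_{(1^a)}/(t;t)_a \) is not needed since \( e_a=P_{(1^a)} \) directly. Thus the coefficient of \( J_\lambda \) is \( \psi'_{\lambda/(1^a)}/c_\lambda \). For the two-column shape, the first column cells in rows \( 1,\dots,j \) contribute \( (1-qt^{n-j-i+1}) \), which gives a ratio \( (q;t)_{n-j+1}/(q;t)_{n-2j+1} \) in suitable form. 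The first column cells in rows \( >j \) contribute \( (t;t)_{n-2j} \), and the second column contributes \( (t;t)_j \).

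The main obstacle is the bookkeeping: combining \( \psi' \) with \( 1/c_\lambda \) into exactly
\[
  \frac{(q;t)_{a-j}(q;t)_{b-j}(1-qt^{n-2j})}{(t;t)_{a-j}(t;t)_{b-j}(t;t)_j(q;t)_{n-j+1}} .
\]
The factor \( 1-qt^{n-2j} \) should arise as the leftover from a \( (q;t)_{n-2j+1}/(q;t)_{n-2j} \) cancellation. If the \( \psi' \) convention causes trouble, my fallback is to verify the identity by checking \( b=0,1 \) directly and then inducting on \( b \). For the induction I would use \( e_1e_ae_b \) together with the two-column \( e_1 \)-Pieri rule, or equivalently compare both sides under the specialization given by the \( q,t \)-inner product. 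A cheap sanity check is \( j=b \) with \( t\to \)generic and \( x \) a single variable set; the \( j=0 \) term for \( b=0 \) must give \( e_a=J_{(1^a)}/(t;t)_a \), which the formula reproduces because \( (q;t)_{a}(1-qt^{a})/(q;t)_{a+1}=1 \).
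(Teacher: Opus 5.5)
You are following the paper's route. The paper writes \( e_a=P_{(1^a)} \), applies Macdonald's Pieri rule for multiplication by \( e_b \) \cite[Ch.~VI,~(6.24)~(iv)]{Macdonald}, and divides by \( c_\lambda=\prod_{c\in\lambda}(1-q^{\arm(c)}t^{\leg(c)+1}) \). Several of your steps are correct:
\begin{itemize}
\item the identification of the shapes \( (2^j,1^{n-2j}) \), \( 0\le j\le b \);
\item the relevant cells \( (i,1) \) with \( j<i\le a \) (columns meeting the strip, rows not meeting it);
\item the computation \( c_\lambda=(t;t)_j\,(t;t)_{n-2j}\,(q;t)_{n-j+1}/(q;t)_{n-2j+1} \).
\end{itemize}
However, one step fails as written. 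The vertical-strip coefficient is
\[
  \psi'_{\lambda/\mu}=\prod_{s}\frac{b_\lambda(s)}{b_\mu(s)},
\]
with the product over that same set of cells, not \( b_\mu(s)/b_\lambda(s) \). The smallest case already shows this. Take \( \mu=(1) \), \( \lambda=(1,1) \). From \( P_{(2)}=m_2+\frac{(1+q)(1-t)}{1-qt}m_{11} \) one gets \( e_1^2=P_{(2)}+\frac{(1-q)(1+t)}{1-qt}P_{(11)} \). This agrees with \( b_{(11)}(1,1)/b_{(1)}(1,1)=\frac{(1-t^2)(1-q)}{(1-qt)(1-t)} \), whereas your ratio gives its reciprocal. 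With your orientation, the product over \( j<i\le a \) inverts, and the bookkeeping you describe cannot produce the stated coefficient. Notably, the cancellation you predict, \( (q;t)_{n-2j+1}/(q;t)_{n-2j} \), only occurs with the correct orientation.

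Here is the corrected computation. Put \( m=a-i \). The cells \( (i,1) \) with \( j<i\le a \) have arm \( 0 \) in both shapes, leg \( m \) in \( \mu=(1^a) \), and leg \( b-j+m \) in \( \lambda \). Hence
\[
  \psi'_{\lambda/(1^a)}
  =\prod_{m=0}^{a-j-1}\frac{(1-t^{b-j+m+1})(1-qt^{m})}{(1-qt^{b-j+m})(1-t^{m+1})}
  =\frac{(t;t)_{n-2j}\,(q;t)_{a-j}\,(q;t)_{b-j}}{(t;t)_{a-j}\,(t;t)_{b-j}\,(q;t)_{n-2j}}.
\]
This equals \( 1 \) at \( j=b \), consistent with the first column not meeting the strip there. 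Dividing by \( c_\lambda \) gives exactly the coefficient in the lemma, with \( (q;t)_{n-2j+1}/(q;t)_{n-2j}=1-qt^{n-2j} \). So once the ratio is flipped, your argument is the paper's proof with the computation written out.

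Your fallback is unnecessary, and as stated it would not work. The product \( e_1e_ae_b \) is not \( e_ae_{b+1} \), and multiplying a two-column \( J \) by \( e_1 \) produces three-column shapes, so no recursion closes within the two-column family.
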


\begin{proof}
  Note that \( P_{(1^n)}(x;q,t)=e_n(x) \). Then the lemma follows
  directly from the Pieri rule for Macdonald polynomials in
  \cite[Ch.~VI,~(6.24)~(iv)]{Macdonald} by taking \( \mu \) to be a
  one-column partition.
\end{proof}

\begin{proof}[Proof of \Cref{thm:two-column}]
  We use \Cref{lem:Pieri} to verify the \( e \)-coefficients of
  \( J_{(2^\ell,1^{n-2\ell})} (x;q,t) \). Note that the identities
  \eqref{eqn:Pieri} with \( (a,b)=(n-\ell,\ell) \) for
  \( 0\le \ell\le \lfloor n/2\rfloor \) determine the polynomials
  \( J_{(2^\ell,1^{n-2\ell})}(x;q,t) \) uniquely. Indeed, the
  coefficient of \( J_{(2^\ell,1^{n-2\ell})} \) in \eqref{eqn:Pieri}
  when \( j=\ell \) is
  \[
    \frac{(q;t)_{n-2\ell}\,(1-qt^{n-2\ell})}
         {(t;t)_{n-2\ell}\,(t;t)_{\ell}\,(q;t)_{n-\ell+1}} \ne 0,
  \]
  so \eqref{eqn:Pieri} with \( (a,b)=(n-\ell,\ell) \) expresses
  \( J_{(2^\ell,1^{n-2\ell})} \) in terms of \( e_{n-\ell}\,e_\ell \)
  and the \( J_{(2^j,1^{n-2j})} \) with \( j<\ell \), and induction on
  \( \ell \) proves the claim. Therefore, it suffices to show that the
  right-hand sides of \eqref{eq:18} satisfy \eqref{eqn:Pieri} with
  \( (a,b)=(n-\ell,\ell) \) for every
  \( 0\le \ell\le \lfloor n/2\rfloor \), that is,
  \begin{equation}\label{eq:19}
   e_{(n-\ell,\ell)}(x)=\sum_{p=0}^\ell S_{n,\ell,p} e_{(n-p,p)}(x),
 \end{equation}
 where
 \[
   S_{n,\ell,p} =   \sum_{j=p}^\ell q^{j-p}\frac{(1-t^{n-2p})(1-qt^{n-2j})}{(1-q)}\frac{(q^{-1};t)_{j-p}(t;t)_{n-j-p-1}(q;t)_{n-\ell -j}(q;t)_{\ell -j}}{(t;t)_{j-p}(qt;t)_{n-j-p}(t;t)_{n-\ell -j}(t;t)_{\ell -j}} .
 \]
 Since \(\{ e_\lambda\} \) forms a basis, \eqref{eq:19} is equivalent to
 \( S_{n,\ell,p} = \chi(p=\ell) \). The case \( p=\ell \) is
 immediate, so it remains to show that \( S_{n,\ell,p} = 0 \) for
 \( 0\le p<\ell \le \lfloor n/2\rfloor \).

  Reversing the summation by replacing \( j \) with \( \ell-j \)
  and using the identities
  \[
    (a;t)_{r+j}=(a;t)_r(at^r;t)_j,
    \qquad
    (a;t)_{r-j}
      =\frac{(a;t)_r}
             {(-a)^j t^{(r-1)j-\binom{j}{2}}(a^{-1}t^{1-r};t)_j},
  \]
  where the second identity holds for \( 0\le j\le r \), we obtain
  \begin{align*}
    S_{n,\ell,p}
      &=\frac{q^{\ell-p}(1-t^{n-2p})(q^{-1};t)_{\ell-p}
               (t;t)_{n-\ell-p-1}(q;t)_{n-2\ell}}
              {(1-q)(t;t)_{\ell-p}(qt;t)_{n-\ell-p}(t;t)_{n-2\ell}}\\
      &\quad\times\sum_{j=0}^{\ell-p}t^j
        \frac{(q,t^{p-\ell},t^{n-\ell-p},qt^{n-2\ell};t)_j
               (1-qt^{n-2\ell+2j})}
             {(t,qt^{p-\ell+1},qt^{n-\ell-p+1},t^{n-2\ell+1};t)_j}.
  \end{align*}
  We apply the terminating very-well-poised \( {}_6\phi_5 \)
  summation formula \cite[(II.21)]{GR2004} with base \( t \) and
  parameters
  \[
    a=qt^{n-2\ell},\qquad b=t^{n-\ell-p},\qquad c=q,
  \]
  and terminating parameter \( t^{p-\ell} \).
  Since \( at^{\ell-p+1}/(bc)=t \), the sum above equals
  \[
    (1-a)\frac{(at,at/(bc);t)_{\ell-p}}
                   {(at/b,at/c;t)_{\ell-p}}
      =(1-qt^{n-2\ell})
        \frac{(qt^{n-2\ell+1},t^{p-\ell+1};t)_{\ell-p}}
             {(qt^{p-\ell+1},t^{n-2\ell+1};t)_{\ell-p}}
      =0.
  \]
  Indeed, since \( \ell-p\ge1 \), the factor
  \( (t^{p-\ell+1};t)_{\ell-p} \) contains \( 1-t^0=0 \).
  Thus \( S_{n,\ell,p}=0 \), completing the proof.
\end{proof}

Using \Cref{thm:two-column}, we can prove \Cref{conj:epositivity} when
\( \mu \) has at most two columns.

\begin{cor}\label{cor:two-column-positivity}
  For every partition \( \mu \) with at most two columns and every
  positive integer \( k \), the expression in
  \eqref{eq:Schurpositivity} has \( e \)-coefficients in
  \( \ZZ_{\ge0}[q] \).
\end{cor}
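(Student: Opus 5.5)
The plan is to read off \Cref{conj:epositivity} for \( \mu=(2^\ell,1^{n-2\ell}) \) directly from the cell-factorization in \Cref{thm:two-column}. Since \( e_{(n-p,p)} \) are distinct basis elements for distinct \( p \le \ell\le\lfloor n/2\rfloor \), it suffices to show the following for each fixed \( p \). After setting \( t=q^{-k} \), multiplying by \( q^{k\sum_i i\mu_i} \) and dividing by \( (q-1)^n \), the product \( \prod_{c\in\mu}\wt_{\mu,p}(c) \) should lie in \( \ZZ_{\ge0}[q] \). There are exactly \( n \) cells, so I will divide each cell factor by one copy of \( (q-1) \). I will show each such quotient is a nonnegative power of \( q \) times a \( q \)-integer, up to a Laurent monomial \( q^{m_c} \). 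Then I handle the total power of \( q \) by a separate global argument.

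\textbf{Cell-by-cell positivity.} I go through the three cases of \( \wt_{\mu,p}(c) \) with \( t=q^{-k} \). Two standard reductions are used throughout: for an integer \( N\ge1 \),
\[
  \frac{1-q^{-N}}{q-1}=q^{-N}[N]_q,
  \qquad
  \frac{q-q^{-N'}}{q-1}=q^{-N'}[N'+1]_q \quad (N'\ge0).
\]

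First, for \( i\le p \), the factor is \( 1-q^{\arm(c)-k(\leg(c)+1)} \). Here \( \arm(c)\in\{0,1\} \) and \( k(\leg(c)+1)\ge k\ge1\ge\arm(c) \). If \( \arm(c)=0 \), the first reduction gives \( q^{-k(\leg(c)+1)}[k(\leg(c)+1)]_q \). If \( \arm(c)=1 \), the quotient is \( q^{1-k(\leg(c)+1)}[k(\leg(c)+1)-1]_q \). The bracket is nonzero here, because a cell with \( \arm(c)=1 \) in the first column has a cell above it in the second column, so \( \leg(c)\ge1 \) and \( k(\leg(c)+1)-1\ge1 \).

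Second, the southwest cell \( (p+1,1) \) gives \( 1-q^{-k(n-2p)} \). This cell exists only when \( n-2p\ge1 \); if \( n=2p \), then \( p=\ell \) and the stated convention means there is no such factor. So the first reduction applies.

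Third, consider the remaining cells with \( i>p \). In column \( 1 \) we have \( i\ge p+2 \), and the factor is \( 1-q^{-k(i-p-1)} \) with \( k(i-p-1)\ge1 \). In column \( 2 \) the factor is \( q-q^{-k(i-p-1)} \) with \( i-p-1\ge0 \), which is handled by the second reduction. Hence every normalized coefficient has the form \( q^{M}\cdot\prod(\text{nonzero }q\text{-integers}) \) for some \( M\in\ZZ \). In particular, it is a Laurent polynomial with nonnegative coefficients and nonzero constant term after removing \( q^M \); no sign issues arise.

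\textbf{The power of \( q \).} The remaining point, which I expect to be the only real obstacle, is \( M\ge0 \). Tracking exponents cell by cell is messy, because individual cells can contribute negative exponents. Instead I argue globally. By \Cref{thm:NASF}, the \( t \)-degree of \( J_\mu(x;q,t) \) is at most \( \max\coinv+\max\neg \). We have \( \coinv\le n((\mu')')=n(\mu) \) and \( \neg\le n \), so the \( t \)-degree is at most \( n(\mu)+n=\sum_i i\mu_i \). Hence \( q^{k\sum_i i\mu_i}J_\mu(x;q,q^{-k}) \) has polynomial \( e \)-coefficients.

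Each such coefficient equals \( (q-1)^n q^M P(q) \) with \( P\in\ZZ_{\ge0}[q] \) and \( P(0)\ne0 \). If \( M<0 \), this expression would have a pole at \( q=0 \), contradicting polynomiality. So \( M\ge0 \), and every \( e \)-coefficient lies in \( \ZZ_{\ge0}[q] \).
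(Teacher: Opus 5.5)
Your argument is correct apart from one harmless slip (below). For the only delicate point, the power of \( q \), it takes a different route from the paper.

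The paper substitutes \( t=q^{-k} \) into \eqref{eq:18}, uses \( 1-q^{-a}=q^{-a}(q-1)[a]_q \), and writes the normalized coefficient of \( e_{(n-p,p)} \) explicitly as \( q^p \) times a product of \( q \)-integers. In your notation, it computes \( M=p \) exactly. The cell exponents are less messy than you feared: they sum to \( p-k\sum_i i\mu_i \) after telescoping with \( \binom{m+1}{2}=\binom{m}{2}+m \).

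You do the same cell-by-cell reduction and then get \( M\ge0 \) by a soft argument. \Cref{thm:NASF} bounds the \( t \)-exponents by \( \coinv+\neg\le n(\mu)+n=\sum_i i\mu_i \), and the \( q \)-exponents are \( \maj\ge0 \). Since \( \{e_\lambda\} \) is a \( \ZZ \)-basis, this passes to the \( e \)-coefficients, so \( (q-1)^nq^MP(q) \) must be a polynomial. That forces \( M\ge0 \) when \( P(0)\ne0 \), and the argument is valid.

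The two routes buy different things. The paper's route is self-contained given \Cref{thm:two-column} and yields an explicit positive formula with the exact power of \( q \). Yours imports the HHL formula and only gives \( M\ge0 \). In exchange, it spares all exponent bookkeeping and transfers directly to other product formulas. For example, it shows at once that the normalized \( e_n \)-coefficient from \Cref{thm:e_n} lies in \( \ZZ_{\ge0}[q] \) for every \( \mu \).

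The slip is in your reason that \( [k(\leg(c)+1)-1]_q\ne0 \), which is wrong. For \( c=(i,1) \), \( \arm(c)=1 \) only means \( (i,2)\in\mu \); it does not put a cell above \( c \). When \( n=2\ell \) and \( p=\ell \), the cell \( (\ell,1) \) has \( \arm=1 \) and \( \leg=0 \). It contributes \( [k-1]_q \), which is \( 0 \) for \( k=1 \). Already \( [e_{(1,1)}]J_{(2)}=(1-t)(1-qt) \) vanishes at \( t=q^{-1} \).

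So \( P(0)\ne0 \) can fail, but only when \( P=0 \). In that case the coefficient is \( 0\in\ZZ_{\ge0}[q] \) trivially, so adding this case before the pole argument completes the proof.
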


\begin{proof}
  For \( \mu=(2^\ell,1^{n-2\ell}) \), consider the coefficients
  \[
    A_j=[e_{(n-j,j)}]\left(
      q^{k\sum_i i\mu_i}\frac{J_\mu(x;q,q^{-k})}{(q-1)^n}\right),
    \qquad 0\le j\le\ell.
  \]
  Substituting \( t=q^{-k} \) into \Cref{thm:two-column} and using
  \( 1-q^{-a}=q^{-a}(q-1)[a]_q \), we obtain, for
  \( 0\le j<\ell \),
  \[
      A_j=q^j[k(n-2j)]_q
        \prod_{i=0}^{\ell-j-1}[ki+1]_q
        \prod_{i=1}^{n-\ell-j-1}[ki]_q
       \prod_{i=\ell-j+1}^{\ell}[ki]_q
        \prod_{i=n-\ell-j+1}^{n-\ell}[ki-1]_q.
  \]
  For \( j=\ell \), the convention in \Cref{thm:two-column} gives
  \[
    A_\ell=q^\ell
      \prod_{i=1}^{n-2\ell}[ki]_q
      \prod_{i=1}^{\ell}[ki]_q
      \prod_{i=n-2\ell+1}^{n-\ell}[ki-1]_q.
  \]
  Thus \( A_j\in\ZZ_{\ge0}[q] \) for every \( j \), proving the
  corollary.
\end{proof}

\section{The \( e_n \)-coefficient}\label{sec:one-row}

Combining the relation between \( P_\mu(x;q,t) \) and \( J_\mu(x;q,t) \) with the triangularity
of \( P_\mu(x;q,t) \) in the monomial symmetric function basis, we obtain  
  \[
   J_\mu(x;q,t)  = \sum_{\lambda\ge \mu'} c_{\mu,\lambda} e_\lambda,
  \]
  where
  \begin{equation}\label{eq:cmuprime}
   c_{\mu, \mu'} =  \prod_{c\in \mu} (1-q^{\arm(c)} t^{\leg(c)+1})
   =  \prod_{c\in \mu'} (1-q^{\leg(c)} t^{\arm(c)+1}).
  \end{equation}
  In particular,
  \[
    J_{(1^n)}(x;q,t) = (t;t)_n e_n.
  \]
  More generally, we compute the \( e_n \)-coefficient of
  \( J_\mu(x;q,t) \) for any \( \mu\vdash n \).

\begin{thm}\label{thm:e_n}
  For \( n\ge 1 \) and \( \mu\vdash n \), we have
  \begin{align*}
    [e_n] J_\mu(x;q,t)
    &=(1-t^n)(q-1) \cdots (q^{\mu_1-1}-1)
    \prod_{c \notin \Row_1(\mu)}(q^{\arm'(c)}- t^{\leg'(c)})\\
&=(1-t^n)(q-1) \cdots (q^{\mu_1-1}-1)
    \prod_{c \notin \Col_1(\mu')}(q^{\leg'(c)}-  t^{\arm'(c)}),
\end{align*}
where \( \Row_1(\mu) \) (resp.~\( \Col_1(\mu) \)) is the set of cells
in the first row (resp.~column) of \( \mu \).
Equivalently,
\begin{equation}\label{eq:9}
    [e_n] J_\mu(x;q,t)
    =(1-t^n) \prod_{c\in \mu'\setminus \{(1,1)\}}(q^{\leg'(c)}-  t^{\arm'(c)}).
\end{equation}
\end{thm}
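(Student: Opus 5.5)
The plan is to extract the \( e_n \)-coefficient with a plethystic specialization that kills every \( e_\lambda \) with \( \ell(\lambda)\ge2 \) to higher order, and then to evaluate \( J_\mu \) at that specialization with Macdonald's principal specialization formula. Concretely, consider the ring homomorphism \( f\mapsto f\bigl[\tfrac{1-u}{1-t}\bigr] \) on \( \Lambda_F \), with \( u \) a new indeterminate. Macdonald's evaluation formula \cite[Ch.~VI, (6.17)]{Macdonald} gives
\[
  J_\mu\Bigl[\frac{1-u}{1-t};q,t\Bigr]=\prod_{c\in\mu}\bigl(t^{\leg'(c)}-q^{\arm'(c)}u\bigr).
\]
This is the usual form \( P_\mu\bigl[\frac{1-u}{1-t}\bigr]=\prod_{c}\frac{t^{\leg'(c)}-q^{\arm'(c)}u}{1-q^{\arm(c)}t^{\leg(c)+1}} \), multiplied by the normalizing product in \eqref{eq:J=P}.

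First, I specialize the elementary functions. Applying the displayed formula to \( \mu=(1^k) \) and using \( J_{(1^k)}=(t;t)_ke_k \) gives
\[
  e_k\Bigl[\frac{1-u}{1-t}\Bigr]=\frac{\prod_{i=0}^{k-1}(t^i-u)}{(t;t)_k}.
\]
Each \( e_k \) with \( k\ge1 \) is therefore divisible by \( (1-u) \), so \( e_\lambda\bigl[\frac{1-u}{1-t}\bigr] \) is divisible by \( (1-u)^{\ell(\lambda)} \).

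Next, I write \( J_\mu=\sum_\lambda c_{\mu,\lambda}e_\lambda \), apply the specialization, divide by \( 1-u \), and set \( u=1 \). Every term with \( \ell(\lambda)\ge2 \) vanishes, and the \( e_n \) term contributes
\[
  c_{\mu,(n)}\cdot\frac{\prod_{i=1}^{n-1}(t^i-1)}{(t;t)_n}=c_{\mu,(n)}\cdot\frac{(-1)^{n-1}}{1-t^n}.
\]
On the other side, the cell \( (1,1) \) of \( \mu \) contributes the factor \( 1-u \) in the product formula. Cancelling it and setting \( u=1 \) leaves \( \prod_{c\ne(1,1)}\bigl(t^{\leg'(c)}-q^{\arm'(c)}\bigr) \). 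Since this product has \( n-1 \) factors, equating the two sides gives
\[
  [e_n]J_\mu=(1-t^n)\prod_{c\in\mu\setminus\{(1,1)\}}\bigl(q^{\arm'(c)}-t^{\leg'(c)}\bigr).
\]

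The first-row cells of \( \mu \) have \( \leg'=0 \) and \( \arm'=j-1 \), so they give \( (q-1)\cdots(q^{\mu_1-1}-1) \); this proves the first displayed form. The second form and \eqref{eq:9} follow by conjugation, since \( \arm' \) and \( \leg' \) swap under \( \mu\mapsto\mu' \).

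The only step needing care is justifying the specialization. One must check that Macdonald's evaluation formula applies with \( u \) as a formal variable, so that everything is a polynomial identity in \( u \). One must also confirm the exact form \( t^{\leg'}-q^{\arm'}u \), rather than its \( (q,t) \)-swapped version. I would verify the latter against \( \mu=(1^k) \), where it must give \( (t;t)_ke_k \), and against the one-row case in \Cref{thm:one-row-direct}, where the \( e_n \)-coefficient is \( (1-t^n)\prod_{i=1}^{n-1}(q^i-1) \) (from \( \alpha=(n) \)). Both checks are consistent with the formula above. The remaining steps are routine cancellations.
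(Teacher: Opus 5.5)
Your proposal is correct and follows the paper's proof. Both use the specialization $f\mapsto f\bigl[\frac{1-u}{1-t}\bigr]$, the evaluation formula $J_\mu\bigl[\frac{1-u}{1-t}\bigr]=\prod_{c\in\mu}\bigl(t^{\leg'(c)}-q^{\arm'(c)}u\bigr)$ (cited in the paper as Macdonald's Ch.~VI, (8.8)), and the cancellation of the factor $1-u$ from the cell $(1,1)$. The one difference is the extraction step (\Cref{lem:a_n}): the paper expands $e_m$ in power sums to show only $e_n$ survives to first order in $1-u$, while you compute $e_k\bigl[\frac{1-u}{1-t}\bigr]=\prod_{i=0}^{k-1}(t^i-u)/(t;t)_k$ exactly from $J_{(1^k)}=(t;t)_k e_k$, which is slightly cleaner and yields the same factor $(-1)^{n-1}/(1-t^n)$.
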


To prove \Cref{thm:e_n}, we need the following lemma.

\begin{lem}\label{lem:a_n}
Let \( n\ge 1 \). If
\[
  f=\sum_{\lambda\vdash n} a_\lambda e_\lambda,
\]
then
\begin{equation}\label{eq:en-extraction}
  [e_n]f
  =
  (-1)^{n-1}(1-t^n)
  \lim_{u\to 1}f\left[ \frac{1-u}{1-t} \right]/ (1-u).
\end{equation}
  \end{lem}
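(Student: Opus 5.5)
By linearity it suffices to check \eqref{eq:en-extraction} on each basis element \( e_\lambda \) with \( \lambda\vdash n \). That is, we must show
\[
  \lim_{u\to1} e_\lambda\!\left[\frac{1-u}{1-t}\right]\Big/(1-u)
\]
equals \( (-1)^{n-1}/(1-t^n) \) when \( \lambda=(n) \) and equals \( 0 \) otherwise.

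The starting point is the plethystic generating function for elementary symmetric functions. Since \( \sum_{m\ge0} e_m[X]z^m=\exp\bigl(\sum_{k\ge1}(-1)^{k-1}p_k[X]z^k/k\bigr) \) and
\[
  p_k\!\left[\frac{1-u}{1-t}\right]=\frac{1-u^k}{1-t^k},
\]
each \( e_m[(1-u)/(1-t)] \) is a polynomial in the quantities \( (1-u^k)/(1-t^k) \), and hence in \( u \), with coefficients in \( \mathbb{Q}(t) \). Each such quantity vanishes at \( u=1 \). So \( e_m[(1-u)/(1-t)] \) vanishes at \( u=1 \) for every \( m\ge1 \), and is in fact divisible by \( 1-u \).

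The vanishing gives the "otherwise" case. If \( \ell(\lambda)\ge2 \), then \( e_\lambda[(1-u)/(1-t)] \) is a product of at least two factors, each divisible by \( 1-u \). The product is therefore divisible by \( (1-u)^2 \), and the limit after dividing by \( 1-u \) is \( 0 \).

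For \( \lambda=(n) \), expand the exponential modulo \( (1-u)^2 \). Every term of degree at least \( 2 \) in the power sums is \( O((1-u)^2) \), so only the linear term survives:
\[
  e_n\!\left[\frac{1-u}{1-t}\right]\equiv \frac{(-1)^{n-1}}{n}\,\frac{1-u^n}{1-t^n}\pmod{(1-u)^2}.
\]
Dividing by \( 1-u \) and letting \( u\to1 \) uses \( (1-u^n)/(1-u)\to n \), which gives the limit \( (-1)^{n-1}/(1-t^n) \). Multiplying by the prefactor \( (-1)^{n-1}(1-t^n) \) in \eqref{eq:en-extraction} yields \( 1 \), as required.

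As a sanity check, the explicit formula \( e_n[(1-u)/(1-t)]=(-u)^{n}\ldots \) obtained from the \( q \)-binomial theorem is consistent with this; for instance \( n=1 \) gives \( (1-u)/(1-t) \). There is no serious obstacle here: the only point needing care is justifying that each \( e_m \)-plethysm is divisible by \( 1-u \). This is cleanest from the power-sum expansion, which has no constant term for \( m\ge1 \), each power sum plethysm being divisible by \( 1-u \).
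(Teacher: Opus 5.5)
Your proof is correct and is essentially the paper's argument: both expand $e_m$ in power sums, use that $p_k[(1-u)/(1-t)]=(1-u^k)/(1-t^k)$ is divisible by $1-u$, conclude that every $e_\lambda$ with $\ell(\lambda)\ge2$ is $O((1-u)^2)$, and note that only the term $(-1)^{n-1}p_n/n$ of $e_n$ survives to first order. You spell out the divisibility of each factor $e_{\lambda_i}[(1-u)/(1-t)]$ by $1-u$, which the paper leaves implicit; your closing ``sanity check'' formula is unfinished (it should read $e_n[(1-u)/(1-t)]=(-u)^n(u^{-1};t)_n/(t;t)_n$), but the proof does not need it.
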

  \begin{proof}
By \cite[Ch.~I, (2.14$'$)]{Macdonald}, we have
\begin{equation}\label{eq:20}
  e_m \left[ \frac{1-u}{1-t} \right]=\sum_{\lambda\vdash m}
  (-1)^{m-\ell(\lambda)}z_\lambda^{-1}p_\lambda \left[ \frac{1-u}{1-t} \right],
\end{equation}
where \( z_\lambda \) is given in \eqref{eq:z_la}.
Since
\[
  p_k\left[ \frac{1-u}{1-t} \right]=\frac{1-u^k}{1-t^k}
  =\frac{k(1-u)}{1-t^k}+O((1-u)^2),
\]
we see that only the term \(\lambda=(m)\) of the right-hand side of
\eqref{eq:20} contributes to first order in \(1-u\). Therefore,
\begin{align*}
  f \left[ \frac{1-u}{1-t} \right] &= a_{(n)}e_n \left[ \frac{1-u}{1-t} \right] +
  \sum_{\lambda< (n)}a_\lambda e_\lambda \left[ \frac{1-u}{1-t} \right]\\
  &= a_{(n)} (-1)^{n-1}\frac{1-u}{1-t^n} + O((1-u)^2). 
\end{align*}
Then the result follows by taking the limit as \( u \) approaches \( 1 \) and solving
for the coefficient \( a_{(n)} \).
  \end{proof}

  \begin{proof}[Proof of \Cref{thm:e_n}]
    By \cite[Ch.~VI, (8.8)]{Macdonald}, we have
\[
  J_\mu\left[\frac{1-u}{1-t};q,t\right]
  =\prod_{c\in\mu} \left(t^{\leg'(c)}-q^{\arm'(c)}u\right).
\]
Since the cell \( (1,1) \) corresponds to the factor \( (1-u) \), we have 
\[
  \lim_{u\to 1}J_\mu\left[ \frac{1-u}{1-t} \right]/ (1-u)
  =
  \prod_{c\in\mu\setminus\{(1,1)\}}
  \left(t^{\leg'(c)}-q^{\arm'(c)}\right).
\]
By applying \Cref{lem:a_n}, we obtain 
\begin{align*}
  [e_n]J_\mu(x;q,t)
  &=
  (-1)^{n-1}(1-t^n)
  \prod_{c\in\mu\setminus\{(1,1)\}}
  \left(t^{\leg'(c)}-q^{\arm'(c)}\right)  \\
  &=
  (1-t^n)
  \prod_{c\in\mu\setminus\{(1,1)\}}
  \left(q^{\arm'(c)}-t^{\leg'(c)}\right).
\end{align*}
Since all cells in the first row have coleg length \( 0 \), we obtain
the first identity. The remaining expressions follow directly from
analyzing the diagram of \( \mu \).
\end{proof}
  
\section{Final remarks}
\label{sec:chrom-symm-funct}

In this section, we explain the analogy between the normalization of
\( J_\mu(x;q,t) \) in \Cref{conj:epositivity} and the
\( \alpha \)-chromatic symmetric functions introduced by Haglund, Oh,
and Yoo \cite{HOY2025}. We then propose a conjecture on an expansion
of the normalized \( J_\mu(x;q,t) \) in terms of chromatic symmetric
functions when \( q=1 \).

For a graph \( G \) with vertex set \( V=\{v_1,\dots,v_n\} \), the
\emph{chromatic symmetric function} \( \chi_G(x) \) is defined by
\[
  \chi_G(x) = \sum_{c:V\to \ZZ^+} x_{c(v_1)} \cdots x_{c(v_n)},
\]
where the sum is over all proper colorings \( c \) of \( G \). If
\( G \) is the incomparability graph of a natural unit interval order,
then it can be identified with a Dyck path \( \gamma \) of length
\( 2n \). From now on, we consider only such graphs and write
\( \chi_\gamma(x) \) in place of \( \chi_G(x) \). See \cite{Stan95, HOY2025}
for more details.

By \eqref{eq:J=H}, the normalization of \( J_\mu(x;q,t) \) in
\Cref{conj:epositivity} satisfies
\[
  q^{k\sum_{i\ge1} i\mu_i} \frac{J_\mu(x;q,q^{-k})}{(q-1)^n} =
  \frac{\tilde H_\mu[(q^k-1)X;q,q^k]}{(q-1)^n}.
\]
Applying the same plethystic substitution to the unicellular LLT polynomial
\( \LLT_\gamma(x;q) \) and dividing by \( (q-1)^n \)
gives the \( \alpha \)-chromatic symmetric function at \( \alpha=k \):
\[
  \chi_\gamma^{(k)}(x;q) =\frac{\LLT_\gamma[(q^k-1)X;q]}{(q-1)^n}.
\]

When \( q=1 \), the \( \alpha \)-chromatic symmetric functions are
polynomials in \( \alpha \) whose coefficients are symmetric
functions. Hence, they can be expanded in terms of the basis
\( \{\binom{\alpha}{r}\}_{r\ge0} \) of the polynomials in
\( \alpha \). Haglund, Oh, and Yoo \cite[Theorem~4.4]{HOY2025} showed
that each coefficient of \( \binom{\alpha}{r} \) has a positive expansion 
 in terms of chromatic symmetric functions:
\begin{equation}\label{eq:21}
  \chi_\gamma^{(\alpha)}(x;1)
  =\sum_{r=1}^n\binom{\alpha}{r} r!
  \sum_{S\in\mathcal{S}(n,r)}\chi_{\beta(\gamma,S)}(x).
\end{equation}
Here \( \mathcal{S}(n,r) \) denotes the set of all set partitions of
\( \{1,2,\dots,n\} \) into \( r \) blocks, and \( \beta(\gamma,S) \)
is a certain Dyck path determined by \( \gamma \) and \( S \). By
Hikita's theorem \cite{Hikita2025}, this implies that
\( \chi_\gamma^{(\alpha)}(x;1) \) is \( e \)-positive for every
positive integer \( \alpha \).

Now we define
\[
  \hat{J}^{(\alpha)} _\mu(x)= \lim_{q\rightarrow 1}\frac{J_\mu (x;q,q^{-\alpha})}{(q-1)^{|\mu|}}.
\]
Using the Haglund--Haiman--Loehr formula
\cite[Proposition~8.1]{HHL2005}, one can see that this is also a
polynomial in \( \alpha \). Inspired by \eqref{eq:21}, we propose the
following question.

\begin{question}\label{conj:jack-chromatic}
  Let \( n,r \) be positive integers, \( \mu\vdash n \) and \( 0\le r\le n \).
  Denote by \( \mathcal{OP}(n,r) \) the set of ordered set partitions of
  \( \{1,2,\dots,n\} \) into \( r \) blocks. For each
  \( S\in \mathcal{OP}(n,r) \), can we find a Dyck path
  \( \pi(\mu, S) \) such that
  \begin{equation}\label{eq:22}
    \hat{J}^{(\alpha)} _\mu(x)= \sum_{r=1}^n \binom{\alpha}{r} \sum_{S\in \mathcal{OP}(n,r)}
    \chi_{\pi(\mu, S)}(x)\ ?
  \end{equation}
\end{question}

Again by the result of Hikita \cite{Hikita2025}, an affirmative answer
to this question would imply the \( e \)-positivity of \( \hat{J}^{(\alpha)}_\mu (x) \) for
every positive integer \( \alpha \).

We note that the chromatic symmetric functions \( \{ \chi_\gamma(x)\}_{\gamma\in \mathcal{D}} \),
where \( \mathcal{D} \) is the set of Dyck paths, do not form a basis of the space of
symmetric functions. Thus, an expansion of the form \eqref{eq:22} need not be
unique. Note also that if \( \pi(\mu,S) \) depends only on \( \mu \)
and the underlying set partition of \( S \), then one can rewrite
\eqref{eq:22} as
\[
  \hat{J}^{(\alpha)} _\mu(x)= \sum_{r=1}^n \binom{\alpha}{r} r!
  \sum_{S\in \mathcal{S}(n,r)} \chi_{\pi(\mu, S)}(x),
\]
which has the same form as \eqref{eq:21}. However, such a rewriting
is not possible in general. For example, if \( \mu=(2,2,1) \), then
the coefficient of \( \binom{\alpha}{4} \) has no expansion in terms of
chromatic symmetric functions in which every coefficient is a
nonnegative integer multiple of \( 4! \).

\bibliographystyle{abbrv}

\end{document}